\let\classAND\AND
\let\AND\relax
\let\AND\classAND
\newcommand{\imunit}{{\dot{\imath\hspace*{-0.2em}\imath}}}
\newcommand{\trans}{\ensuremath{\mkern-1.5mu\mathsf{T}}}
\newcommand{\herm}{\ensuremath{\mathsf{H}}}
\def\Sys{\mathcal{G}}
\def\Sysred{\Sys_r}
\def\wt#1{\widetilde{#1}}
\def\wh#1{\widehat{#1}}
\def\hot#1{\textcolor{red}{#1}}
\active \gdef@{\mkern1mu}}
\def\CH{\mathcal{H}}
\def\CX{\mathcal{X}}
\def\CL{\mathcal{L}}
\def\CW{\mathcal{W}}
\def\CM{\mathcal{M}}
\def\CN{\mathcal{N}}
\def\CY{\mathcal{Y}}
\def\CJ{\mathcal{J}}
\def\CK{\mathcal{K}}
\def\R{{\mathbbm R}}
\def\C{{\mathbbm C}}
\def\Cmm{\C^{m\times m}}
\def\Cnn{\C^{n\times n}}
\def\Cpm{\C^{p\times m}}
\def\Cpp{\C^{p\times p}}
\def\Cpr{\C^{p\times r}}
\def\Crm{\C^{r\times m}}
\def\Crr{\C^{r\times r}}
\def\Rn{\R^n}
\def\Rm{\R^m}
\def\Rp{\R^p}
\def\Rr{\R^r}
\def\Rmm{\R^{m\times m}}
\def\Rnn{\R^{n\times n}}
\def\Rnm{\R^{n\times m}}
\def\Rpn{\R^{p\times n}}
\def\Rpm{\R^{p\times m}}
\def\Rpp{\R^{p\times p}}
\def\Rrr{\R^{r\times r}}
\def\Rpr{\R^{p\times r}}
\def\Rrm{\R^{r\times m}}
\def\Rnr{\R^{n\times r}}
\newcommand{\BA}{\ensuremath{\mathbf{A}}}
\newcommand{\BB}{\ensuremath{\mathbf{B}}}
\newcommand{\BC}{\ensuremath{\mathbf{C}}}
\newcommand{\BD}{\ensuremath{\mathbf{D}}}
\newcommand{\BE}{\ensuremath{\mathbf{E}}}
\newcommand{\BG}{\ensuremath{\mathbf{G}}}
\newcommand{\BI}{\ensuremath{\mathbf{I}}}
\newcommand{\BJ}{\ensuremath{\mathbf{J}}}
\newcommand{\BK}{\ensuremath{\mathbf{K}}}
\newcommand{\BL}{\ensuremath{\mathbf{L}}}
\newcommand{\BM}{\ensuremath{\mathbf{M}}}
\newcommand{\BN}{\ensuremath{\mathbf{N}}}
\newcommand{\BP}{\ensuremath{\mathbf{P}}}
\newcommand{\BQ}{\ensuremath{\mathbf{Q}}}
\newcommand{\BR}{\ensuremath{\mathbf{R}}}
\newcommand{\BT}{\ensuremath{\mathbf{T}}}
\newcommand{\BU}{\ensuremath{\mathbf{U}}}
\newcommand{\BV}{\ensuremath{\mathbf{V}}}
\newcommand{\BW}{\ensuremath{\mathbf{W}}}
\newcommand{\BX}{\ensuremath{\mathbf{X}}}
\newcommand{\BY}{\ensuremath{\mathbf{Y}}}
\newcommand{\BZ}{\ensuremath{\mathbf{Z}}}
\newcommand{\Be}{\ensuremath{\mathbf{e}}}
\newcommand{\Bu}{\ensuremath{\mathbf{u}}}
\newcommand{\Bx}{\ensuremath{\mathbf{x}}}
\newcommand{\By}{\ensuremath{\mathbf{y}}}
\newcommand{\Bzero}{\ensuremath{\mathbf{0}}}
\newcommand{\BSigma}{\ensuremath{\mathbf{\Sigma}}}
\newcommand{\bbL}{\ensuremath{\wt{\mathbb{L}}}}
\newcommand{\bbM}{\ensuremath{\wt{\mathbb{M}}}}
\newcommand{\bbH}{\ensuremath{\wt{\mathbb{H}}}}
\newcommand{\bbG}{\ensuremath{\wt{\mathbb{G}}}}
\def\wtU{\wt{\BU}}
\def\wtL{\wt{\BL}}
\def\wtBA{\wt{\BA}}
\def\wtBB{\wt{\BB}}
\def\wtBC{\wt{\BC}}
\def\wtBZ{\wt{\BZ}}
\def\wtBY{\wt{\BY}}
\def\wtBSigma{\wt{\BSigma}}
\def\BGforEA{\BG_{\sigma,@\BA}}
\def\BGforB{\BG_\BB}
\def\BGforC{\BG_\BC}
\def\BQbst{\BQ_{\CW}}
\def\BQprbt{\BQ_{\CM}}
\def\BPprbt{\BP_{\CN}}
\def\BQbrbt{\BQ_{\CJ}}
\def\BPbrbt{\BP_{\CK}}
\def\BPgen{\BP_{\CX}}
\def\BQgen{\BQ_{\CY}}
\def\BRj{\BR_\CJ}
\def\BRk{\BR_\CK}
\def\hatBRj{\wh{\BR}_\CJ}
\def\hatBRk{\wh{\BR}_\CK}
\def\BCbrbt{\BC_\CJ}
\def\BBbrbt{\BB_\CK}
\def\hatBCbrbt{\wh{\BC}_\CJ}
\def\hatBBbrbt{\wh{\BB}_\CK}
\newcommand{\BAgenX}{\ensuremath{\BA}}
\newcommand{\BBgenX}{\ensuremath{\BB_{\CX}}}
\newcommand{\BCgenX}{\ensuremath{\BC_{\CX}}}
\newcommand{\BDgenX}{\ensuremath{\BD_{\CX}}}
\newcommand{\nx}{\ensuremath{n}}
\newcommand{\px}{\ensuremath{p_x}}
\newcommand{\mx}{\ensuremath{m_x}}
\newcommand{\BPgenX}{\ensuremath{\BP_{\CX}}}
\newcommand{\BUgenX}{\ensuremath{\BU_{\CX}}}
\newcommand{\BLgenYTilde}{\ensuremath{\wt{\BL}_{\CY}}}
\newcommand{\BUgenXTilde}{\ensuremath{\wt{\BU}_{\CX}}}
\newcommand{\BAgenY}{\ensuremath{\BA}}
\newcommand{\BBgenY}{\ensuremath{\BB_{\CY}}}
\newcommand{\BCgenY}{\ensuremath{\BC_{\CY}}}
\newcommand{\BDgenY}{\ensuremath{\BD_{\CY}}}
\newcommand{\ny}{\ensuremath{n}}
\newcommand{\py}{\ensuremath{p_y}}
\newcommand{\my}{\ensuremath{m_y}}
\newcommand{\BQgenY}{\ensuremath{\BQ_{\CY}}}
\newcommand{\BLgenY}{\ensuremath{\BL_{\CY}}}
\theoremstyle{plain}\newtheorem{theorem}{Theorem}
\theoremstyle{plain}
\theoremstyle{definition}\newtheorem{definition}{Definition}
\theoremstyle{definition}\newtheorem{remark}{Remark}
\newcommand{\MOR}{\textsf{MOR}\xspace}
\newcommand{\ROM}{\textsf{ROM}\xspace}
\newcommand{\FOM}{\textsf{FOM}\xspace}
\newcommand{\LTI}{\textsf{LTI}\xspace}
\newcommand{\SPD}{\textsf{SPD}\xspace}
\newcommand{\SVD}{\textsf{SVD}\xspace}
\newcommand{\ARE}{\textsf{ARE}\xspace}
\newcommand{\PRARE}{\textsf{PR-ARE}\xspace}
\newcommand{\BRARE}{\textsf{BR-ARE}\xspace}
\newcommand{\ALE}{\textsf{ALE}\xspace}
\newcommand{\QBT}{\textsf{QuadBT}\xspace}
\newcommand{\GQBT}{\textsf{GenQuadBT}\xspace}
\newcommand{\ProjMOR}{\textsf{ProjMOR}\xspace}
\newcommand{\BTr}{\textsf{BT}\xspace}
\newcommand{\BST}{\textsf{BST}\xspace}
\newcommand{\PRBT}{\textsf{PRBT}\xspace}
\newcommand{\BRBT}{\textsf{BRBT}\xspace}
\newcommand{\QBST}{\textsf{QuadBST}\xspace}
\newcommand{\QPRBT}{\textsf{QuadPRBT}\xspace}
\newcommand{\QBRBT}{\textsf{QuadBRBT}\xspace}
\newcommand{\MORLAB}{\textsf{MORLAB}\xspace}
\newcommand{\MATLAB}{\textsf{MATLAB}\xspace}
\begin{document}
\title{Generalizations of data-driven balancing: What to sample for different balancing-based reduced models}

\author[$\ast$]{Sean Reiter}
\affil[$\ast$]{Department of Mathematics, Virginia Tech,
  Blacksburg, VA 24061, USA.\authorcr
  \email{seanr7@vt.edu}, \orcid{0000-0002-7510-1530}}
  
\author[$\star$]{Ion Victor Gosea Gugercin}
\affil[$\star$]{Max Planck Institute for Dynamics of Complex Technical Systems,
  Sandtorstr. 1, 39106 Magdeburg, Germany.\authorcr
  \email{gosea@mpi-magdeburg.mpg.de}, \orcid{0000-0003-3580-4116}}

\author[$\dagger$]{Serkan Gugercin}
\affil[$\dagger$]{Department of Mathematics,
  Division of Computational Modeling and Data Analytics, and
  Virginia Tech National Security Institute,
  Virginia Tech, Blacksburg, VA 24061, USA.\authorcr
  \email{gugercin@vt.edu}, \orcid{0000-0003-4564-5999}
}
  
\shorttitle{Generalizations of data-driven balancing: What to sample for \BTr{}-\ROM{}s}
\shortauthor{S. Reiter, I. V. Gosea, S. Gugercin}
\shortdate{2025-06-23}
  
\keywords{%
Balanced truncation, data-driven modeling, algebraic Riccati equations, spectral factors, numerical quadrature
}

\msc{%
  93B15, 
  93A15, 
  37M99, 
  65D30, 
  65K99, 
  15A24  
}
  
\abstract{%
The quadrature-based balanced truncation (\QBT) framework of~\cite{gosea2022data} is a non-intrusive reformulation of balanced truncation (\BTr{}), a classical projection-based model-order reduction technique for linear systems. \QBT is non-intrusive in the sense that it builds approximate balanced truncation reduced-order models entirely from system response data, e.g., transfer function measurements, without the need to reference an explicit state-space realization of the underlying full-order model. In this work, we generalize the \QBT framework to other types of balanced truncation model reduction. Namely, we show what transfer function data are required to compute data-driven reduced models by balanced stochastic truncation, positive-real balanced truncation, and bounded-real balanced truncation. In each case, these data are evaluations of particular spectral factors associated with the system of interest.
These results lay the theoretical foundation for data-driven reformulations of the aforementioned \BTr{} variants.
Although it is not yet clear how to compute or obtain these spectral factor data in a practical real-world setting, examples using synthetic (numerically evaluated) transfer function data are included to validate the data-based reduced models.
}

\novelty{}

\maketitle

\section{Introduction}
\label{sec:intro}

Model-order reduction (\MOR) refers to the procedure by which one approximates a large-scale dynamical system with a surrogate reduced-order model (\ROM)~\cite{antoulas2005approximation,benner2017model,antoulas2020interpolatory,BenGQetal20a}.
\emph{Balanced truncation} (\BTr) model reduction~\cite{moore1981principal,mullis1976synthesis} and its variants~\cite{desai1984transformation,green1988balanced,opdenacker1988contraction,enns1984model} comprise a well-established class of methods for the approximation of linear time-invariant (\LTI) dynamical systems, which are the focus of this work.
The allure of \BTr methods stems from the fact that they preserve desirable qualitative features of the full-order model (\FOM), e.g., asymptotic stability or passivity, and often provide tractable \emph{a priori} bounds on the relative or absolute $\CH_\infty$ approximation error.

\BTr and its variants are \emph{intrusive} by nature; that is, they require access to an explicit state-space formulation of the approximated system's internal dynamics to compute a \ROM from the full-order system matrices via projection.
In this work, we are interested in \emph{data-driven} approaches for reduced-order modeling.
Data-driven methodologies are \emph{non-intrusive} in the sense that they construct surrogate \ROM{}s entirely from input-output invariants, such as impulse response measurements or transfer function evaluations, without the need to reference a particular realization of the \FOM. 
These data can be obtained experimentally, e.g., by measuring the response of some physical system in a laboratory setting, or synthetically via numerical simulation of a computational model.

The recent contribution~\cite{gosea2022data} introduces a data-driven reformulation of the classical \BTr: \emph{quadrature-based balanced truncation} (\QBT). 
Other data-driven formulations of \BTr have been proposed throughout the years; see, e.g.,~\cite{WP02,Ro05,singler2009proper,benner2010balanced}. Indeed, even in~\cite{moore1981principal} Moore had already motivated a data-driven formulation of \BTr based in the time-domain.
However, the methodologies highlighted above require state trajectories that depend upon a particular system realization.
By contrast, \QBT only requires state-invariant transfer function data.
In this work, we develop a theoretical framework for data-driven (quadrature-based) balancing that generalizes \QBT{} to other types of \BTr{}-\MOR{}.
Specifically, we develop data-driven reformulations of balanced stochastic truncation (\BST)~\cite{desai1984transformation,green1988balanced,green1988relative}, positive-real balanced truncation (\PRBT)~\cite{desai1984transformation}, and bounded-real balanced truncation (\BRBT)~\cite{opdenacker1988contraction}.

\textbf{Main contributions.}
The essential quantities in any bal\-ancing-based model reduction are the \emph{system Gramians}: a pair of symmetric positive (semi-)definite matrices that carry some system-theoretic significance.
In the classical (Lyapunov) \BTr{} due to~\cite{moore1981principal,mullis1976synthesis}, these are the reachability and observability Gramians that uniquely satisfy dual algebraic \emph{Lyapunov} equations (\ALE{}s).
In variants of balanced truncation, the Gramians often satisfy algebraic \emph{Riccati} equations (\ARE{}s).
Once the \emph{relevant} Gramians---that is, the Gramians relevant to the type of \BTr{} being performed---are specified, the balancing-based \ROM{} is entirely specified by the action of the Gramians' square root factors on the full-order system matrices.
Otherwise, the algorithmic computation of a \BTr{}-\ROM{} is identical across \BTr{} variants.
By exploiting this insight, we develop here a theoretical framework for data-driven balancing that generalizes \QBT{}~\cite{gosea2022data}, and shows how to construct various types of (approximate) balancing-based reduced models from different input-output invariant frequency-response data.
Our primary contributions are as follows.
\begin{enumerate}
    \item[1.] Theorem~\ref{thm:gen_quadbt} presents a generalized framework for quad\-rature-based balancing.
    This result is an enhanced version of those in~\cite[Prop.~3.2, Prop.~3.3]{gosea2022data} that applies to \emph{any} pair of Gramians which satisfy a generic pair of \ALE{}s, and can thus be expressed as contour integrals.
    At least each of the \BTr{} variants considered in this paper---namely \BST{}, \PRBT{}, and \BRBT{}---can be expressed in this framework.
    By decomposing the relevant Gramians using appropriately chosen quadrature-based square-root factors, we show how to (approximately) compute the reduced-order quantities in an intrusive \BTr{}-\ROM{} from various transfer function data.
    Unlike \QBT{}, these data are not necessarily evaluations of the underlying full-order system transfer function; rather, they are defined by the pair of Gramians being balanced.
    
    \item[2.] By applying the result of Theorem~\ref{thm:gen_quadbt} to the variants \BST, \PRBT, and \BRBT, 
    Theorems~\ref{thm:bst_from_data}~--~\ref{thm:brbt_from_data} answer the titular question: what do you need to sample for different (data-driven) \BTr reduced models? 
    In each instance, we show that the data required to approximately mimic these types of \BTr{} are evaluations of certain input-output invariant \emph{spectral factors} associated with the original system's transfer function.
\end{enumerate}

In contrast to \QBT{}, it is not yet clear how to measure the response of these spectral factors in a real-world experimental setting.
In this sense, the methodology developed here is not yet practical for a completely data-driven setting.
Nonetheless, the reduced-order models we derive are non-intrusive in the sense that they make no explicit reference to internal quantities once the requisite data are provided.
Thus, the results of this paper lay the theoretical foundation for data-driven implementations of \BST, \PRBT, and \BRBT.

\textbf{Organization.}
The rest of the work is organized as follows.
Section~\ref{sec:bt} reviews the requisite linear systems theory and outlines the key details of Lyapunov \BTr{}~\cite{moore1981principal,mullis1976synthesis} to ground the subsequent discussion.
Section~\ref{sec:gen_quadbt} presents a generalized formulation of \QBT that extends to any balancing-based \MOR{} wherein the relevant pair Gramians can be expressed as solutions of a pair of \ALE{}s, including the \BTr{} variants studied in this paper.
Section~\ref{sec:applied_genqbt} then interprets the results of Section~\ref{sec:gen_quadbt} in the settings of \BST, \PRBT, and \BRBT to obtain data-driven reformulations of these \BTr variants.
Computational experiments using synthetic (numerically evaluated) transfer function data are provided in Section~\ref{sec:numerics} as a proof of concept to validate the data-based reduced models.
Section~\ref{sec:conclusion} concludes the paper and discusses future work.

\section{Linear systems and Lyapunov \BTr{}}
\label{sec:bt}

Throughout this work, we consider \LTI dynamical systems given in state-space form as
\begin{align}
    \label{eq:sys}
    \Sys:
    \left\{
    \begin{array}{rcl}\dot{\Bx}(t)  & = &  \BA@\Bx(t) + 
    \BB@\Bu(t),~~~\Bx(0)=\Bzero_{n\times 1}, \\
    \By(t) &= & \BC@\Bx(t) +\BD\Bu(t),
    \end{array}
    \right.
\end{align}
where $\Bzero_{n_1\times n_2} \in \R^{n_1\times n_2}$ denotes a matrix with all entries equal to zero.
The inputs, state coordinates and outputs of $\Sys$ at time $t\geq 0$ are contained in $\Bu(t)\in\Rm$, $\Bx(t)\in\Rn$ and $\By(t)\in\Rp$. The matrices $\BA\in\R^{n\times n}$, $\BB\in \R^{n\times m}$, $\BC\in \R^{p\times n}$, $\BD\in \R^{p\times m}$ constitute a \emph{state-space realization} of $\Sys$.
For notation, we use $\left(\BA,\BB,\BC,\BD\right)$ to indicate the particular realization of $\Sys$ in~\cref{eq:sys}.
In this work, we assume that $\Sys$ is \emph{asymptotically stable}, i.e., $\BA$ is Hurwitz and thus its eigenvalues lie in the open left-half plane.
Given a realization of $\Sys$, we assume it is \emph{minimal}, i.e., fully reachable and observable~\cite[Lemma~4.42]{antoulas2005approximation}.
The \emph{transfer function} of $\Sys$, defined as
\begin{align}
    \label{eq:tf}
    \BG(s)\coloneqq\BC(s\BI_n - \BA)^{-1}\BB + \BD\in\Cpm,
\end{align}
for the complex variable $s\in\C$
is a matrix-valued rational function analytic in the closed right half-plane, where $\BI_n$ denotes the $n \ n\times n$ identity matrix. The transfer function fully characterizes the input-to-output mapping of $\Sys$ in the frequency domain and is state-space realization invariant. 
The $\CH_\infty$ norm of $\Sys$ is defined as 
$\|\Sys\|_{\CH_\infty}\coloneqq\sup_{\omega\in\R}\sigma_{\max}\left(\BG(\imunit\omega)\right)$, where $\sigma_{\max}(\cdot)$ denotes the maximal singular value of $(\cdot)$ and $\imunit^2=-1$.
The \emph{dual} to $\Sys$ is defined as the \LTI system ($-\BA^{\trans}$, $-\BC^{\trans}$, $\BB^{\trans}$, $\BD^{\trans}$) having the transfer function $\BG(-s)^{\trans}$.

Given a system $\Sys$ as in~\cref{eq:sys}, we seek a \LTI-\ROM{}
\begin{align}
    \label{eq:sysred}
    \Sysred:
    \left\{
    \begin{array}{rcl}\dot{\Bx}_r(t)  & = &  \BA_r@\Bx_r(t) + 
    \BB_r@\Bu(t),~~~\Bx_r(0)=\Bzero_{r\times 1}, \\
    \By_r(t) &= & \BC_r@\Bx_r(t) +\BD_r\Bu(t),
    \end{array}
    \right.
\end{align}
having the reduced-order transfer function 
\begin{align*}
    \BG_r(s)\coloneqq\BC_r(s\BI_r - \BA_r)^{-1}\BB_r + \BD_r\in\Cpm,
\end{align*}
where $\Bx_r(t)\in\Rr$, $\By_r(t)\in\Rp$, $\BA_r\in\Rrr$, $\BB_r\in \Rrm$, $\BC_r\in \Rpr$, and $\BD_r\in \Rpm$ for $r\ll n$.
To be an effective surrogate, $\Sysred$ should (i) accurately reproduce the input-to-output response of $\Sys$, i.e., $\By_r(t)\approx \By(t)$ for any admissible input $\Bu(t)$, and (ii) preserve important qualitative features the \FOM.
\emph{Projection} is at the core of many model reduction algorithms, including balancing-based methods.
Given left and right model reduction subspaces spanned by $\BW_r\in\Rnr$ and $\BV_r\in\Rnr$ such that $\BW_r^{\trans}\BV_r=\BI_r$, the order-$r$ \ROM{} $(\BA_r, \BB_r, \BC_r, \BD_r)$ computed via \emph{Petrov-Galerkin projection} is determined by
\begin{equation}
\label{eq:pg_proj}
\begin{alignedat}{2}
    &\BA_r =
    {{{\BW_r^{\trans}} \BA {\BV_r}}},~~~\BB_r = {{{\BW_r^{\trans}}}\BB},~~~\BC_r = {{\BC{\BV_r}}}.
\end{alignedat}
\end{equation}
In projection-based model-order reduction~(\ProjMOR), it is common to choose $\BD_r=\BD$. Such methods differ in the way they choose $\BV_r$ ad $\BW_r$; see~\cite{antoulas2005approximation,benner2017model,antoulas2020interpolatory} for more details on \ProjMOR.

\emph{Balanced truncation} (\BTr) and its variants are well-established projection-based methods for the model reduction of \LTI dynamical systems. \BTr-\ROM{}s preserve important system-theoretic features of the \FOM and often satisfy tractable bounds on the $\CH_\infty$ approximation error, which in turn bounds the $\CL_2$ norm of the output error $\By-\By_r$~\cite[Ch.~2.1]{antoulas2020interpolatory}.
The \emph{system Gramians} are the key components of any balancing-based model reduction algorithm. 
In classical \BTr, the Gramians are the unique solutions to dual algebraic \emph{Lyapunov equations} (\ALE{}s). 
In many other variants of \BTr, the Gramians are the \emph{minimal} (or \emph{stabilizing}) solutions of algebraic \emph{Riccati equations} (\ARE{}s).
\emph{Balancing} is just the simultaneous diagonalization of two such Gramians.
Once these two matrices are computed, the \FOM is balanced by an appropriate change of coordinate system in which the pertinent Gramians are diagonal and identical.
Order reduction is then accomplished by truncating the least important components of the state space; these are precisely the states associated with the smallest magnitude singular values of the balanced Gramians. 
In practice, the balancing and truncation steps are implemented simultaneously using the so-called \emph{square-root} algorithm~\cite{laub1987,tombs87};~\cite[Sec.~7.3]{antoulas2005approximation}, which fits under \ProjMOR as in~\cref{eq:pg_proj}.

Next in Section~\ref{ss:lyap_bt}, we recount the key details of \BTr{}-\MOR{} in its original Lyapunov setting~\cite{mullis1976synthesis,moore1981principal} to ground the subsequent discussion.
The important details of the other \BTr{} variants considered in this work, as well as the specification of the relevant pair of Gramians in each instance, are self-contained in Section~\ref{sec:applied_genqbt}.
For a more general treatise of \BTr model reduction, see~\cite[Ch.~7]{antoulas2005approximation},~\cite{gugercin2004survey,breiten2021balancing,benner2017modelch6}.

\subsection{Lyapunov \BTr{}}
\label{ss:lyap_bt}

\BTr was independently introduced in the works~\cite{mullis1976synthesis,moore1981principal}. 
In its original setting, which we refer to as \emph{Lyapunov} \BTr{}, the central quantities are the \emph{observability} and \emph{reachability} Gramians $\BQ\in\Rnn$ and $\BP\in\Rnn$ of $\Sys$. These are given as the unique solutions to dual \ALE{}s:
\begin{align}
    \label{eq:obsv_lyap}
    \BA^{\trans}\BQ+ \BQ\BA + \BC^{\trans}\BC&=\Bzero\\
    \label{eq:reach_lyap}
    \mbox{and}~~\BA\BP + \BP\BA^{\trans} + \BB\BB^{\trans}&=\Bzero.
\end{align}
The uniqueness of $\BQ$ and $\BP$ follows from the asymptotic stability of $\Sys$~\cite[Prop.~6.2]{antoulas2005approximation}. 

The Gramians $\BQ$ and $\BP$ are symmetric positive definite (\SPD) matrices by minimality of $\Sys$, and in turn, there exist nonsingular matrices $\BL, \BU\in\Rnn$ such that $\BQ=\BL\BL^{\trans}$ and $\BP=\BU\BU^{\trans}$.
The \emph{Hankel singular values} of $\Sys$ are defined as the singular values of $\BL^{\trans}\BU$, denoted $\sigma\big(\BL^{\trans}\BU\big)$.
In Lyapunov balanced coordinates, i.e., the coordinate system in which $\BQ$ and $\BP$ are diagonal and equal,
states that are weakly reachable are simultaneously weakly observable. 
These are precisely the states identified by the smallest magnitude Hankel singular values. A Lyapunov \BTr-\ROM{} is obtained by effectively truncating those components of the state space. 
Lyapunov \BTr-\ROM{}s retain the asymptotic stability of the \FOM~\cite{pernebo1982model} and satisfy an \emph{a priori} upper bound on the $\CH_\infty$ approximation error $\|\Sys-\Sysred\|_{\CH_\infty}$ in terms of the neglected Hankel singular values~\cite{enns1984model}.

\section{A generalized framework for \QBT}
\label{sec:gen_quadbt}
Traditional formulations of Lyapunov \BTr{} and its variants are \emph{intrusive} insofar as they require a state-space realization $\BA$, $\BB$, and $\BC$ of a linear system $\Sys$ to compute to compute the \BTr{}-\ROM{} by projection according to~\cref{eq:pg_proj}.
By contrast, the recent contribution~\cite{gosea2022data} derives a \emph{non-intrusive}, or data-driven reformulation of \BTr, called \emph{quadrature-based balanced truncation} (\QBT).
Data for our purposes refers to input-to-output frequency-response data, e.g., particular transfer functions evaluated along the imaginary axis $\imunit\R$.
As its name suggests, \QBT{} achieves a non-intrusive formulation by replacing the exact square-root factors of the Gramians $\BQ$ and $\BP$ in the square-root algorithm with approximate quadrature-based factors derived from numerical quadrature rules used to implicitly approximate $\BQ$ and $\BP$.

While the \QBT{} algorithm~\cite{gosea2022data} is non-intrusive, its presentation is limited to the usual Lyapunov setting where the Gramians being balanced are $\BQ$ and $\BP$ that satisfy~\cref{eq:obsv_lyap} and~\cref{eq:reach_lyap}.
In this section, we develop a generalized framework for quadrature-based (data-driven) balancing that is applicable to a wider range of \BTr{} variants, and in particular those mentioned in Section~\ref{sec:intro}: \BST{}, \PRBT{}, and \BRBT{}.
This generalized presentation contains \QBT{} as a special case; see Remark~\ref{remark:QBT_case}.

\subsection{Square-root Lyapunov \BTr for generic Gramians}
\label{ss:sqrt_bt}

As the first step towards extending \QBT{} to other types of balancing, we describe here a framework for Lyapunov \BTr{} wherein the Gramians are the unique solutions to a generic pair of \ALE{}s, not necessarily~\cref{eq:reach_lyap} or~\cref{eq:obsv_lyap}.
The methods of \BST, \PRBT, and \BRBT can all be interpreted from this perspective,
and the generalized formulation of \QBT{} that we derive is based on the Lyapunov formulation of these variants.

Let $\BQgen\in\Rnn$ and $\BPgen\in\Rnn$ denote an arbitrary pair of Gramians relevant to the type of balancing being considered.
For example, in the Lyapunov setting, we would have $\BQgen=\BQ$ and $\BPgen=\BP$ that solve~\cref{eq:obsv_lyap} and~\cref{eq:reach_lyap}.
The key assumption we make is that the matrices $\BQgen$ and $\BPgen$ are the solutions to a pair of \ALE{}s:
\begin{align}
\label{eq:gen_obsv_lyap}
\BA^{\trans}\BQgen + \BQgen\BA + \BCgenY^{\trans}\BCgenY&=\Bzero\\
\label{eq:gen_reach_lyap}
\mbox{and}~~
\BA\BPgen + \BPgen\BA^{\trans} +\BBgenX\BBgenX^{\trans}&=\Bzero,
\end{align}
where $\BBgenX\in\R^{\nx\times \mx}$ and $\BCgenY\in\R^{\py\times \ny}$ for positive integers $\mx,\py$.
Because $\BA$ is assumed Hurwitz, the solutions $\BQgen$ and $\BPgen$ to~\cref{eq:gen_obsv_lyap} and~\cref{eq:gen_reach_lyap} are \emph{unique}.
Equations~\cref{eq:gen_obsv_lyap} and~\cref{eq:gen_reach_lyap} yield the following perspective: the matrices $\BPgen$ and $\BQgen$ can be viewed respectively as the observability and reachability Gramians of a pair of asymptotically stable linear systems $\CX$ and $\CY$ defined according to~\cref{eq:sys}:
\begin{equation}
\label{eq:gen_sys}
    \CY=\left(\BAgenY,\BBgenY,\BCgenY,\BDgenY\right),~~\CX=\left(\BAgenX,\BBgenX,\BCgenX,\BDgenX\right),
\end{equation}
where $\BBgenY\in\R^{\ny \times \my}$, $\BDgenY\in\R^{\py\times \my}$ and $\BCgenX\in\R^{\px\times \nx}$, $\BDgenX\in\R^{\px\times \mx}$ for positive integers $\my,\px$.
For ease of reference later on, we refer to~\cref{eq:gen_obsv_lyap} as the \emph{observability Lyapunov equation} of the system $\CY$, and~\cref{eq:gen_reach_lyap} as the \emph{reachability Lyapunov equation} of the system $\CX$.
We further assume that the given realizations of $\CY$ and $\CX$ are minimal so that $\BQgen$ and $\BPgen$ are \SPD{}.
The Gramians that appear in different \BTr{} variants correspond to different choices of $\CY$ and $\CX$, and more specifically, particular choices of the matrices $\BCgenY$ and $\BBgenX$.
Exact formulations of these systems, along with the corresponding state-space quadruples, in the context of each \BTr{} variant we consider are provided in Section~\ref{sec:applied_genqbt}.

Given the Gramians $\BQgen$ and $\BPgen$, one can reduce the model order of a linear system $\Sys$ by first computing a balancing transformation such that $\BQgen$ and $\BPgen$ are diagonal and equal, and subsequently truncating states corresponding to the smallest singular values of these balanced Gramians. 
(This is the same general procedure underlying Lyapunov \BTr{} described in Section~\ref{ss:lyap_bt}, but with $\BQgen$ and $\BPgen$ taking the place of $\BQ$ and $\BP$.)
In a numerical setting, one never computes the full-balancing transformation since this is notoriously ill-conditioned~\cite[Sec.~7.3]{antoulas2005approximation}. 
Indeed, one does not even need $\BQgen$ and $\BPgen$, but only their \emph{square-root factors}
$\BLgenY\in\Rnn$ and $\BUgenX\in\Rnn$ such that
\begin{equation}
\label{eqn:UxLy}
    \BQgen = \BLgenY\BLgenY^{\trans}~~\mbox{and}~~\BPgen = \BUgenX\BUgenX^{\trans}.
\end{equation}
The factors $\BLgenY$ and $\BUgenX$ can be obtained directly without forming $\BQgen$ and $\BPgen$ explicitly; see, e.g., the surveys \cite{morBenS11,simoncini2016computational}. 
As already highlighted in Section~\ref{sec:bt}, in a practical setting, balancing and truncation are achieved simultaneously using the \emph{square-root algorithm} for \BTr~\cite{laub1987,tombs87};~\cite[Ch~7.4]{antoulas2005approximation}.
The square-root algorithm, which is written down in Algorithm~\ref{alg:sqrt_bt}, falls under \ProjMOR where the left and right projection subspaces spanned by $\BW_r$ and $\BV_r$ are obtained from the singular-value decomposition (\SVD) of $\BLgenY^{\trans}\BUgenX$.
This implementation is numerically well-conditioned and lends itself to a low-rank implementation by replacing the $\BLgenY$ and $\BUgenX$ with approximate \emph{low-rank} factors $\BLgenYTilde$ and $\BUgenXTilde$. 

\begin{algorithm}[h!]
\caption{Square-root \BTr{}\label{alg:sqrt_bt}}
\begin{algorithmic}
\STATE 
\STATE \textbf{Input:} $\BA\in\Rnn,$ $\BB\in\Rnm$, $\BC\in\Rpn,$ and order $r$ $(1\leq r < n)$ so that $\sigma_r > \sigma_{r+1}$.\\[1ex]
\STATE \textbf{Output:} \BTr-\ROM{} given by $\BA_r\in\Rrr,$ $\BB_r\in\Rrm$, $\BC_r\in\Rpr.$\\[1ex]
\begin{enumerate}
    \item Obtain square-root factors $\BUgenX$, $\BLgenY\in\Rnn$ of \underline{\emph{the relevant system Gramians}} $\BPgen$, $\BQgen\in\Rnn$.
    \item Compute the \SVD of $\BLgenY^{\trans}\BUgenX$:
    \begin{align} \label{eqn:svdLTU}
    \BLgenY^{\trans}\BUgenX=
    \begin{bmatrix} 
    \BZ_1 & \BZ_2
    \end{bmatrix}
     \begin{bmatrix} 
    \BSigma_1 & \\
    & \BSigma_2
    \end{bmatrix}
     \begin{bmatrix} 
    \BY_1^{\trans} \\ \BY_2^{\trans}
    \end{bmatrix},
    \end{align}
    where $\BSigma_1\in\Rrr,$ $\BSigma_2\in\R^{(n-r)\times(n-r)},$ $\BZ_1,\BY_1\in\Rnr$ and $\BZ_2,\BY_2\in\R^{n\times(n-r)}.$ 
    \item Build Petrov-Galerkin model reduction matrices:
\begin{equation} \label{eqn:VrWr}
\BW_r=\BLgenY^{{\trans}}\BZ_1\BSigma_1^{-1/2}, ~~ \BV_r=\BUgenX\BY_1\BSigma_1^{-1/2},
\end{equation}
where $\BW_r^{\trans}\BV_r=\BI_r$ by construction.
    \item Compute the \BTr-\ROM{} using $\BW_r$ and $\BV_r$:
    \begin{equation}
    \label{eqn:romBT}
    \begin{aligned}
    \BA_r&=\BSigma_1^{-1/2}\BZ_1^{\trans}\left(\BLgenY^{{\trans}}\BA\BUgenX\right)\BY_1\BSigma_1^{-1/2},  \\
    \BB_r &=\BSigma_1^{-1/2}\BZ_1^{\trans}\left(\BLgenY^{{\trans}}\BB\right)&,
     \\
    \BC_r&=\left(\BC\BUgenX\right)\BY_1\BSigma_1^{-1/2}. 
    \end{aligned}
    \end{equation}
\end{enumerate}
\end{algorithmic}
\end{algorithm}

We emphasize that the square-root implementation of Algorithm~\ref{alg:sqrt_bt} is applicable \emph{any variant of \BTr} for which the relevant Gramians can be cast as solutions to the \ALE{}s~\cref{eq:gen_obsv_lyap} and~\cref{eq:gen_reach_lyap} for an appropriate choice of $\BCgenY$ and $\BBgenX$, and in particular the \BTr{} variants considered in this work. 
Once the square-root factors $\BLgenY$ and $\BUgenX$ of the relevant Gramians are specified, Steps 2--4 of Algorithm~\ref{alg:sqrt_bt} proceed identically.
The same can be said for a \emph{quadrature-based} implementation; one only needs that the relevant Gramians elicit exploitable integral representations so that $\BLgenY$ and $\BUgenX$ may be replaced by appropriately chosen quadrature-based factors. The assumption that $\BQgen$ and $\BPgen$ satisfy the \ALE{}s~\cref{eq:gen_obsv_lyap} and~\cref{eq:gen_reach_lyap} provides exactly this structure.
This insight enables us to derive a generalized formulation of \QBT{} that is applicable to any type of \BTr{}-\MOR{} for which the relevant Gramians can be viewed as solutions to the \ALE{}s~\cref{eq:gen_obsv_lyap} and~\cref{eq:gen_reach_lyap} for an appropriate choice of $\BCgenY$ and $\BBgenX$.

\subsection{Theoretical formulation for Generalized \QBT}
Consider the square-root factors $\BUgenX$ and $\BLgenY$ of $\BPgen$ and $\BQgen$. Recall from~\cref{eqn:VrWr} in Algorithm~\ref{alg:sqrt_bt} that the \BTr-\ProjMOR bases are defined as 
\begin{equation*}
    \BW_r=\BLgenY\BZ_1\BSigma_1^{-1/2}~~\mbox{and}~~ \BV_r=\BUgenX\BY_1\BSigma_1^{-1/2}.
\end{equation*}
Because $\BZ_1$, $\BY_1$, and $\BSigma_1$ are extracted from the \SVD of $\BLgenY^{\trans}\BUgenX$, the (intrusive) \BTr-\ROM given by~\cref{eqn:romBT} is completely specified by the four quantities:
\begin{align}
    \label{eq:key_bt_quantities}
    \BLgenY^{\trans}\BUgenX,\quad \BLgenY^{\trans}\BA\BUgenX,\quad \BLgenY^{\trans}\BB, \quad \mbox{and} \quad \BC\BUgenX.
\end{align}
We derive approximations to the matrices in~\cref{eq:key_bt_quantities} that are constructed entirely from different state-invariant input-output data.
This will be accomplished by replacing $\BUgenX$ and $\BLgenY$ with certain quadrature-based square-root factors derived from numerical quadrature rules used to implicitly approximate $\BPgen$ and $\BQgen$. These numerical quadratures are indeed \emph{never constructed} but form the basis of the analysis.
First, we introduce two definitions to aid our exposition.

\begin{definition}
\label{def:matrix_labeling}
Let $\BM\in\C^{(pK)\times(mJ)}$. Then for $1\leq j\leq J$ and $1\leq k \leq K$, the 
matrix $\BM_{k,j}\in\Cpm$ denotes 
the $(k,j)$-th block of $\BM$.
{When $J=1$ or $K=1$, we use $\BM_i$ to denote the $i$th $p\times m$-sized row or column block of $\BM$, respectively.}
In the SISO case of $m=p=1$, $\BM_{k,j} = \BM(k,j)$ is a scalar quantity.
\end{definition}

\begin{definition}
\label{def:strictly_proper}
    For a proper rational function $$\BG(s)=\BC(s\BI_n-\BA)^{-1}\BB+\BD\in\Cpm$$ as in~\cref{eq:tf}, we denote the \emph{strictly proper} part of $\BG(s)$ by $\BG_{\infty}(s)$, defined as
    \begin{align}
        \label{eq:strictly_proper}
        \BG_{\infty}(s)=\BC(s\BI_n-\BA)^{-1}\BB = \BG(s) - \lim_{s\rightarrow\infty}\BG(s).
    \end{align}
\end{definition}

Recall that the linear systems $\CX$ and $\CY$ in~\cref{eq:gen_sys} are asymptotically stable because they share the same $\BA$ matrix as the underlying model $\Sys$.
Consequently, the solutions to the \ALE{}s~\cref{eq:gen_reach_lyap} and~\cref{eq:gen_obsv_lyap} are unique~\cite[Prop.~6.2]{antoulas2005approximation} and can be expressed as contour integrals
\begin{align}
\label{eq:reach_gram}
 \BPgen = \frac{1}{2\pi}\int_{\imunit\R}(z \BI_n - \BA)^{-1}\BBgenX
    \left((z\BI_n - \BA)^{-1}\BBgenX\right)^{\herm}\,dz,\\
\label{eq:obsvgram}
 \BQgen = \frac{1}{2\pi}\int_{\imunit\R}\left(\BCgenY(z \BI_n - \BA)^{-1}\right)^{\herm}
   \BCgenY(z\BI_n - \BA)^{-1}\,dz,
\end{align}
where $\BX^{\herm}$ denotes the Hermitian transpose of a matrix $\BX\in\Cnn$.
Consider a numerical quadrature rule defined by the weights $\rho_j^2$ and nodes $\zeta_j$ for $j=1,2,\ldots, J.$
Applying this rule to the integral form of $\BPgen$ in~\cref{eq:reach_gram} reveals the approximate \emph{quadrature-based} factorization
\begin{align*}
 \BPgen&\approx   \sum_{j=1}^{J}\rho_j^2(\imunit\zeta_j \BI_n - \BA)\BBgenX\left((\imunit\zeta_j \BI_n - \BA)\BBgenX\right)^{\herm}\\
 &=\BUgenXTilde\BUgenXTilde^{\herm},
\end{align*}
where $\BUgenXTilde\in\C^{n\times (m_xJ)}$ is defined by
\begin{align}
    \label{eq:gen_cholU}
  \big(\BUgenXTilde\big)_j= \rho_j\left(\imunit\zeta_j\BI_n - \BA\right)^{-1}\BBgenX\in\C^{n\times m_x},
\end{align}
for all $j=1,\ldots, J$ according to the notation in Definition~\ref{def:matrix_labeling}.
A similar quadrature-based factorization can be obtained for $\BQgen$ in~\cref{eq:obsvgram}. Consider a numerical quadrature rule defined by the weights $\phi_k^2$ and nodes $\omega_k$ for all $k=1,2,\ldots, K$. 
Then, $\BQgen\approx\BLgenYTilde\BLgenYTilde^{\herm}$ where $\BLgenYTilde^{\herm}\in\C^{(p_yK)\times n}$ is defined by
\begin{align}
    \label{eq:gen_cholL}
    \big(\BLgenYTilde^{\herm}\big)_k=
    \phi_k\BCgenY(\imunit\omega_k \BI_n -\BA)^{-1}\in\C^{p_y\times n},
\end{align}
for all $k=1,\ldots,K$.
Replacing the exact factors $\BUgenX$ and $\BLgenY$ in Algorithm~\ref{alg:sqrt_bt} with $\BUgenXTilde$ and $\BLgenYTilde$ already yields a low-rank implementation of \BTr{}.
The resulting (approximate) \BTr{}-\ROM{} is completely determined by the quadrature-based quantities
\begin{align}
    \label{eq:key_gen_quadbt_quantities}
    \BLgenYTilde^{\herm}\BUgenXTilde,\quad \BLgenYTilde^{\herm}\BA\BUgenXTilde,\quad \BLgenYTilde^{\herm}\BB, \quad \mbox{and} \quad \BC\BUgenXTilde.
\end{align}
Significantly, the quadrature-based approximations~\cref{eq:key_gen_quadbt_quantities} to the (intrusive) quantities in~\cref{eq:key_bt_quantities} can be computed entirely from \emph{transfer function data}. We prove this next.

\begin{theorem}
    \label{thm:gen_quadbt}
    Define the transfer functions:
    \begin{align}
        \label{eq:GforEA}
        \BGforEA(s)&:=\BCgenY(s\BI_n-\BA)^{-1}\BBgenX\in\C^{p_y\times m_{x}},\\ 
        \BGforB(s) &:=\BCgenY(s\BI_n-\BA)^{-1}\BB\in\C^{p_y\times m},   \label{eq:GforB}\\
         \label{eq:GforC}
        \BGforC(s) &:= \BC(s\BI_n-\BA)^{-1}\BBgenX\in\C^{p\times m_{x}}.
    \end{align}
    Let $\BUgenXTilde$ and $\BLgenYTilde$ be defined as in~\cref{eq:gen_cholU} and~\cref{eq:gen_cholL} respectively. Define the matrices
    {\begin{align}
    \begin{split}
        \label{eq:gen_loewner_quantities}
        {\bbL} &:= \BLgenYTilde^{\herm}\BUgenXTilde\in\C^{(p_yK)\times(m_xJ)},\\
        {\bbM} &:= \BLgenYTilde^{\herm}\BA\BUgenXTilde\in\C^{(p_yK)\times(m_xJ)},\\
        {\bbH} &:= \BLgenYTilde^{\herm}\BB\in\C^{(p_yK)\times m},\\
        {\bbG} &:= \BC\BUgenXTilde\in\C^{p\times (m_xJ)}.
    \end{split}
    \end{align}}
    Then, the $(k,j)$-th blocks of $\bbL$ and $\bbM$ are given by
    \begin{align}
        &\bbL_{k,j} = -\phi_k\rho_j\frac{\BGforEA(\imunit\omega_k)-\BGforEA(\imunit\zeta_j)}{\imunit\omega_k-\imunit\zeta_j}, \label{eqn:Lkj}\\
        &\bbM_{k,j} = -\phi_k\rho_j\frac{\imunit\omega_k\BGforEA(\imunit\omega_k)-\imunit\zeta_j\BGforEA(\imunit\zeta_j)}{\imunit\omega_k-\imunit\zeta_j}, \label{eqn:Mkj}
    \end{align}
    and the $k$-th and $j$-th blocks of $\bbH$ and $\bbG$ are given by
    \begin{align}
        \bbH_k=\rho_k\BGforB(\imunit\omega_k) \quad\mbox{and}\quad        \bbG_j=\phi_j\BGforC(\imunit\zeta_j), \label{eqn:HkGj}
    \end{align}
    for each $1\leq k \leq K$ and $1\leq j \leq J$.
\end{theorem}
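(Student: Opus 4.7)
The plan is to verify each of the four block-matrix identities directly from the definitions of $\wtU_\CX$ and $\wtL_\CY$ in~\eqref{eq:gen_cholU}--\eqref{eq:gen_cholL}. For the $(k,j)$-block of $\bbL$, I will write out
\[
\bbL_{k,j} \;=\; (\wtL_\CY^*)_k\,(\wtU_\CX)_j \;=\; \phi_k\rho_j\,\BC_\CY(\imunit\omega_k\BI_n-\BA)^{-1}(\imunit\zeta_j\BI_n-\BA)^{-1}\BB_\CX,
\]
and analogously for $\bbM_{k,j}$ with an $\BA$ inserted between the two resolvents. The quantities $\bbH_k$ and $\bbG_j$ are immediate single substitutions of the respective sample point into the definitions of $\BGforB$ and $\BGforC$.

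The main tool is the standard \emph{resolvent identity}: for any $s\neq t$ in the resolvent set of $\BA$,
\[
(s\BI_n-\BA)^{-1}(t\BI_n-\BA)^{-1} \;=\; \tfrac{1}{t-s}\bigl[(s\BI_n-\BA)^{-1}-(t\BI_n-\BA)^{-1}\bigr],
\]
which I would derive in a single line by multiplying $(s\BI_n-\BA)^{-1}-(t\BI_n-\BA)^{-1}$ on the right by $(s\BI_n-\BA)(t\BI_n-\BA)^{-1}$ and telescoping. Applied with $s=\imunit\omega_k$, $t=\imunit\zeta_j$, sandwiching by $\BC_\CY$ on the left and $\BB_\CX$ on the right, and reading off $\BGforEA$ from Definition~\eqref{eq:GforEA}, this yields the Loewner-type formula~\eqref{eqn:Lkj} for $\bbL_{k,j}$.

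For the shifted formula~\eqref{eqn:Mkj}, I would use the companion identity
\[
(s\BI_n-\BA)^{-1}\BA(t\BI_n-\BA)^{-1} \;=\; \tfrac{1}{t-s}\bigl[s(s\BI_n-\BA)^{-1}-t(t\BI_n-\BA)^{-1}\bigr],
\]
obtained by writing $\BA(t\BI_n-\BA)^{-1}=t(t\BI_n-\BA)^{-1}-\BI_n$ and applying the plain resolvent identity to the remaining $(s\BI_n-\BA)^{-1}(t\BI_n-\BA)^{-1}$ term. Sandwiching by $\BC_\CY$ and $\BB_\CX$ produces the shifted-Loewner pencil expression in terms of $\imunit\omega_k\BGforEA(\imunit\omega_k)$ and $\imunit\zeta_j\BGforEA(\imunit\zeta_j)$, matching~\eqref{eqn:Mkj}.

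Finally, $\bbH_k=(\wtL_\CY^*)_k\BB = \phi_k\BC_\CY(\imunit\omega_k\BI_n-\BA)^{-1}\BB = \phi_k\BGforB(\imunit\omega_k)$, and symmetrically $\bbG_j = \rho_j\BGforC(\imunit\zeta_j)$, both by a direct substitution into the definitions~\eqref{eq:GforB}--\eqref{eq:GforC}. There is no genuine obstacle here: the only subtlety is careful bookkeeping of the block indices in Definition~\ref{def:matrix_labeling} and of the quadrature weights $\phi_k,\rho_j$, so the proof amounts to two applications of resolvent identities plus two trivial substitutions.
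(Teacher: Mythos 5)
Your proposal is correct and follows essentially the same route as the paper's proof: extract the $(k,j)$ block of each matrix, apply the plain resolvent identity for $\bbL_{k,j}$ and the shifted identity (obtained via $\BA(t\BI_n-\BA)^{-1}=t(t\BI_n-\BA)^{-1}-\BI_n$) for $\bbM_{k,j}$, and obtain $\bbH_k$, $\bbG_j$ by direct substitution. Note also that your weight pairing $\bbH_k=\phi_k\BGforB(\imunit\omega_k)$ and $\bbG_j=\rho_j\BGforC(\imunit\zeta_j)$ agrees with the paper's own proof; the weights $\rho_k$ and $\phi_j$ printed in~\eqref{eqn:HkGj} of the statement appear to be a typo.
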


\begin{proof}{Proof of Theorem~\ref{thm:gen_quadbt}.}
   Let $\Be_i$ be the $i$th canonical unit vector, i.e., its $i$th entry is $1$, and all other entries are $0$. 
   Additionally, for any $i, \ell\in\mathbb{N}$, define the matrix
    \[\BE_{i,\ell}=\begin{bmatrix}
        \Be_{(i-1)\ell + 1} & \Be_{(i-1)\ell + 2} & \cdots & \Be_{i\ell}
    \end{bmatrix}\in\R^{n\times \ell}.\]
    The proof exploits two resolvent identities.
    The first is
    \small
    \begin{align} 
        \label{eqn:res1}
       \hspace{-2mm} (s&\BI_n-\BA)^{-1}(z\BI_n-\BA)^{-1}
        =\frac{(z\BI_n-\BA)^{-1}-(s\BI_n-\BA)^{-1}}{s-z}
    \end{align}
    \normalsize
    for all $s,z\in\C$ that are not in the spectrum of $\BA$.
    Using the definitions of \ $\bbL$ in~\cref{eq:gen_loewner_quantities}, $\BUgenXTilde$ in~\cref{eq:gen_cholU}, $\wtL_{\CY}$ in~\cref{eq:gen_cholL}, and the identity~\cref{eqn:res1}, it follows that 
    \small
    \begin{align*}
        \bbL_{k, j} &= {\BE_{k,p_y}^{\trans}} \bbL{\BE_{j,m_x}} = \left({\BE_{k,p_y}^{\trans}}\BLgenYTilde^{\herm}\right)\left(\BUgenXTilde{\BE_{j,m_x}}\right)\\
        &=\phi_k\rho_j\BCgenY(\imunit\omega_k\BI_n-\BA)^{-1}(\imunit\zeta_j\BI_n-\BA)^{-1}\BBgenX\\
            &= \phi_k \rho_j\BCgenY\left(\frac{(\imunit\zeta_j\BI_n-\BA)^{-1}-(\imunit\omega_k\BI_n-\BA)^{-1}}{\imunit\omega_k - \imunit\zeta_j}\right)\BBgenX\\
            &=-\phi_k\rho_j\frac{\BGforEA(\imunit\omega_k)-\BGforEA(\imunit\zeta_j)}{\imunit\omega_k-\imunit\zeta_j},
    \end{align*}
    \normalsize
    where the last line follows from the definition of $\BGforEA(s)$ in~\cref{eq:GforEA}, thus proving~\cref{eqn:Lkj}. The second resolvent identity we exploit is 
    \begin{align}  
    \begin{split}
    \label{eqn:res2}
        z(z& \BI_n-\BA)^{-1} - s(s\BI_n-\BA)^{-1} \\  
        &= -(z-s)(z\BI_n-\BA)^{-1} \BA(s\BI_n-\BA)^{-1} 
    \end{split}
    \end{align}
    for all $s,z\in\C$ that are not in the spectrum of $\BA$.
    To prove~\cref{eqn:Mkj}, we use~\cref{eqn:res2} and the definitions of $\bbM$ in~\cref{eq:gen_loewner_quantities}, $\BUgenXTilde$ in~\cref{eq:gen_cholU}, and $\BLgenYTilde$ in~\cref{eq:gen_cholL} to obtain
    \small
    \begin{align*}
        \bbM_{k, j} &= {\BE_{k,p_y}^{\trans}} \bbM{\BE_{j,m_x}} = \left({\BE_{k,p_y}^{\trans}}\BLgenYTilde^{\herm}\right)\BA\left(\BUgenXTilde{\BE_{j,m_x}}\right)\\
        &=\phi_k\rho_j\BCgenY(\imunit\omega_k\BI_n-\BA)^{-1}\BA(\imunit\zeta_j\BI_n-\BA)^{-1}\BBgenX\\
         & \hspace{-3ex} = \phi_k \rho_j\BCgenY\left(\frac{\imunit\zeta_j(\imunit\zeta_j\BI_n-\BA)^{-1}-\imunit\omega_k(\imunit\omega_k\BI_n-\BA)^{-1}}{\imunit\omega_k - \imunit\zeta_j}\right)\BBgenX\\
        &=-\phi_k\rho_j\frac{\imunit\omega_k\BGforEA(\imunit\omega_k)-\imunit\zeta_j\BGforEA(\imunit\zeta_j)}{\imunit\omega_k-\imunit\zeta_j}.
    \end{align*}
    \normalsize
    The final claims~\cref{eqn:HkGj} for $\bbH$ and $\bbG$ follow directly from the definitions of $\bbH$ and $\wtL_{\CY}$, as well as $\bbG$ and $\BUgenXTilde$. Observe that
        \begin{align*}
        \bbH_k = {\BE_{k,p_y}^{\trans}} \bbH 
        =\phi_k\BCgenY\left(\imunit\omega_k\BI_n-\BA\right)^{-1}\BB=\phi_k\BGforB(\imunit\omega_k). 
    \end{align*}
    Likewise, for $\bbG$:
    \begin{align*}
        \bbG_j = \bbG{\BE_{j,m_x}} 
        =\rho_j\BC\left(\imunit\zeta_j\BI_n-\BA\right)^{-1}\BBgenX=\rho_j\BGforC(\imunit\zeta_j),
    \end{align*}
    thus completing the proof.
\end{proof}

By replacing the exact quantities in~\cref{eq:key_bt_quantities} with the (approximate) quadrature-based quantities~\cref{eq:key_gen_quadbt_quantities} that are computed from data, we obtain a data-driven formulation of Algorithm~\ref{alg:sqrt_bt}.
We refer to this data-driven formulation as \emph{generalized quadrature-based balanced truncation} (\GQBT).
The choice of notation $\BGforEA(s)$, $\BGforB(s)$, and $\BGforC(s)$ for the transfer functions in~\cref{eq:GforEA}-\cref{eq:GforC} is intentional: the \emph{underscored quantities} in each transfer function are the \emph{reduced-order quantities} in the data-driven surrogate model that require samples of that transfer function.
In other words, Theorem~\ref{thm:gen_quadbt} and the associated notation can be interpreted as follows: 
\begin{enumerate}
    \item[1.] The construction of $\bbL$, and hence its \SVD, and the reduced-order $\wtBA_r$ in Steps (2) and (3) of Algorithm~\ref{alg:gen_quadbt} require samples of $\BGforEA(s)$;
    \item[2.] Construction of the reduced-order $\wtBB_r$ in Step (3) of Algorithm~\ref{alg:gen_quadbt} requires samples of $\BGforB(s)$; 
    \item[3.] Construction of the reduced-order $\wtBC_r$ in Step (3) of Algorithm~\ref{alg:gen_quadbt} requires samples of $\BGforC(s)$.
\end{enumerate}

\subsection{Algorithmic formulation for Generalized \QBT}

Having established its theoretical foundation,
an algorithmic formulation for \GQBT is presented
in Algorithm~\ref{alg:gen_quadbt}.
In principle, Algorithm~\ref{alg:gen_quadbt} only requires the left and right quadrature weights/nodes used to implicitly approximate $\BPgen$ and $\BQgen$, and samples of the transfer functions $\BGforEA(s)$, $\BGforB(s)$, and $\BGforC(s)$ given in~\cref{eq:GforEA}---\cref{eq:GforC} evaluated at these nodes (or at least, the ability to evaluate them).

\begin{algorithm}[hh]
\caption{Generalized-\QBT (\GQBT)\label{alg:gen_quadbt}}
\begin{algorithmic}
\STATE 
\STATE \textbf{Input:} Mappings $\BGforEA(s)$, $\BGforB(s)$, $\BGforC(s)$, order $r$ $(1\leq r <n)$, and:
\begin{itemize}
    \item ``Left'' weights/nodes $\{\rho_j,\zeta_j\}_{j=1}^J$,
    \item ``Right'' weights/nodes  $\{\phi_k,\omega_k\}_{k=1}^K$.
\end{itemize}

\STATE \textbf{Output:} \GQBT-\ROM determined by the state-space matrices $\wtBA_r\in\Crr,$ $\wtBB_r\in\Crm$, $\wtBC_r\in\Cpr.$
\begin{enumerate}
    \item Obtain samples $\{\BGforEA(\imunit\zeta_j)\}_{j=1}^J$, $\{\BGforC(\imunit\zeta_j)\}_{j=1}^J$, $\{\BGforEA(\imunit\omega_k)\}_{k=1}^K$, and $\{\BGforB(\imunit\omega_k)\}_{k=1}^K$. 
    Construct matrices $\big(\bbL, \bbM, \bbH, \bbG\big)$ according to Theorem~\ref{thm:gen_quadbt}.
    \item Compute the \SVD of $\bbL$:
    \begin{align*}
    \bbL=
    \begin{bmatrix} 
    \wtBZ_1 & \wtBZ_2
    \end{bmatrix}
     \begin{bmatrix} 
    \wtBSigma_1 & \\
    & \wtBSigma_2
    \end{bmatrix}
     \begin{bmatrix} 
    \wtBY_1^{\herm} \\ \wtBY_2^{\herm}
    \end{bmatrix}\in\C^{(p_yK)\times (m_xJ)},
    \end{align*}
    where $\wtBSigma_1\in\Rrr,$ $\wtBSigma_2\in\R^{(p_yK-r)\times(m_xJ-r)},$ $\wtBZ_1,\wtBY_1$ and $\wtBZ_2,\wtBY_2$ are partitioned conformally. 
    \item Compute the \GQBT-\ROM:
    \begin{align*}
    \wtBA_r&=\wtBSigma_1^{-1/2}\wtBZ_1^{\herm}\big(\bbM\big)\wtBY_1\wtBSigma_1^{-1/2}\\
    \wtBB_r &=\wtBSigma_1^{-1/2}\wtBZ_1^{\herm}\big(\bbH\big),\\
    \wtBC_r&= \big(\bbG\big)\wtBY_1\wtBSigma_1^{-1/2}.
    \end{align*}
\end{enumerate}
\end{algorithmic}
\end{algorithm}

Some remarks are in order.
We emphasize that at no point do we explicitly compute the quadrature-based approximations of the Gramians $\BPgen$ and $\BQgen$.
These quadrature rules are leveraged \emph{implicitly} to derive the quadrature-based square-root factors in~\cref{eq:gen_cholU} and~\cref{eq:gen_cholL}, and subsequently realize the approximate \BTr-\ROM{} from input-output data.
The error bounds from~\cite[Prop.~3.2]{gosea2022data} that relate the quadrature error to the error in the reduced model matrices apply in this generalized setting, as well.
At face value, the result of Theorem~\ref{thm:gen_quadbt} is not fundamentally different from the main results of~\cite{gosea2022data}. 
Instead, the power of Theorem~\ref{thm:gen_quadbt} lies in its generality; multiple \BTr{} variants wherein the Gramians are solutions to \ARE{}s can be framed in the Lyapunov setting of Section~\ref{ss:sqrt_bt}, allowing for the application of Theorem~\ref{thm:gen_quadbt}. 
Because of this, Theorem~\ref{thm:gen_quadbt} provides the foundational theoretical tool for deriving the data-driven formulations of \BST{}, \PRBT{}, and \BRBT{} that are presented in Section~\ref{sec:applied_genqbt}.
The key deviation of this work from~\cite{gosea2022data} is that the transfer function evaluations required for \GQBT{} are not necessarily those of $\BG(s)$, the transfer function of the linear model being approximated. Rather, Algorithm~\ref{alg:gen_quadbt} requires samples of $\BGforEA(s)$, $\BGforB(s)$, and $\BGforC(s)$ as in~\cref{eq:GforEA}---\cref{eq:GforC}. 
Lastly, we highlight that Algorithm~\ref{alg:gen_quadbt} avoids any explicit reference to internal quantities, e.g., a state-space realization of $\Sys$, or any other linear model.

\begin{remark}
    \label{remark:QBT_case} Theorem~\ref{thm:gen_quadbt} and Algorithm~\ref{alg:gen_quadbt} contain \QBT~\cite{gosea2022data} as a special case. 
    In the usual Lyapunov setting described in Section~\ref{ss:lyap_bt}, we simply have that $\BQgen=\BQ$ and $\BPgen=\BP$, and so the generalized equations~\cref{eq:gen_obsv_lyap} and~\cref{eq:gen_reach_lyap} are the dual \ALE{}s~\cref{eq:obsv_lyap} and~\cref{eq:reach_lyap} corresponding to $\Sys$. Then, $\BBgenX=\BB$, $\BC_\CY=\BC$, and the transfer functions $\BGforEA(s)$, $\BGforB(s)$, and $\BGforC(s)$ all equal $\BG_\infty(s)$.
    This is precisely the aggregate result of~\cite[Prop.~3.1,~Prop.~3.3]{gosea2022data}.
\end{remark}

\section{What to sample for \BTr-variants} 
\label{sec:applied_genqbt}
What remains to be seen is 
what the transfer functions $\BGforEA(s)$, $\BGforB(s)$, and $\BGforC(s)$ in~\cref{eq:GforEA}---\cref{eq:GforC} correspond to for different \BTr{} variants.
We investigate exactly this question next for the cases of \BST, \PRBT, and \BRBT. 
Applying the generalized result of Theorem~\ref{thm:gen_quadbt} to each of the specified variants, we answer the titular question: What does one need to sample for different data-driven \BTr reduced models?
Unlike in the Lyapunov setting, the to-be-sampled data are not necessarily measurements of $\BG(s)$, the transfer function of the \FOM.

In the rest of this section, we sequentially derive data-based formulations of \BST{}, \PRBT{}, and \BRBT{} according to the following structure.
\begin{enumerate}
    \item[1.] We review the key details of these methods and introduce the relevant Gramians, as well as the spectral factorizations that arise from the matrix equations that determine said Gramians.
    \item[2.] We then describe how each variant fits the \GQBT{} framework of Section~\ref{sec:gen_quadbt}. In particular, we provide specific formulations for the linear systems $\CX$ and $\CY$ in~\cref{eq:gen_sys}.
    This enables us to apply Theorem~\ref{thm:gen_quadbt} to each type of \BTr{} for appropriately chosen $\BBgenX$ and $\BCgenY$.
    \item[3.] Finally, we interpret the result of Theorem~\ref{thm:gen_quadbt} applied to the type of \BTr{} in question to show that the transfer functions~\cref{eq:GforEA}---\cref{eq:GforC} can be expressed in terms of the previously introduced spectral factors.
\end{enumerate}
Our treatment of spectral factorizations follows~\cite[Chapter~13.4]{zhou1996robust}; we refer the reader there for a more detailed study.
As we will see, it is not clear how to compute the required spectral factor data in a truly data-driven setup. Thus, the results of this section are intended to establish a complete \emph{theoretical} framework for data-driven \BST{}, \PRBT{}, and \BRBT{}. 
We leave the questions of inferring these spectral factor data from, e.g., evaluations of $\BG(s)$, and developing a computationally practical framework for data-driven \BST{}, \PRBT{}, and \BRBT{} to future research work.

For the results of this section, we use the notation $\left[~~\cdot~~\right]_+$ to denote the \emph{purely stable part} of a rational transfer function. Additionally, when the notation $(\BA, \BB, \BC, \BD)$ does not fit in a single line, we represent the corresponding system by
\begin{equation*}
    \Sys=\left(\begin{array}{ccc|ccc}
         & \BA & & & \BB & \\\hline
         & \BC & & & \BD &
    \end{array}\right).
\end{equation*}

\subsection{\BST from data: \QBST}
\label{ss:qbst}
Model reduction by balanced stochastic truncation (\BST{}) was first introduced in~\cite{desai1984transformation} for the model reduction of stochastic processes, and studied further in~\cite{green1988balanced,green1988relative}.
Suppose that the system $\Sys$ in~\cref{eq:sys} has two additional properties: (i) $\Sys$ is \emph{square}, that is, the input and output dimensions are such that $m=p$, and (ii) the input feed-through term $\BD$ is nonsingular.
A system $\Sys$ is said to be \emph{minimum phase} if the poles and the zeros of its transfer function lie in the open left-half plane.
Compared to Lyapunov \BTr{}, \BST provides an \emph{a priori} upper bound on the \emph{relative} $\CH_\infty$ error $\|\Sys^{-1}(\Sys-\Sysred)\|_{\CH_\infty}$~\cite{green1988relative}, and preserves asymptotic stability as well as the minimum phase property~\cite{green1988balanced}.
For an extension to non-square dynamical systems, see~\cite{benner2001efficient}.

In \BST{}, the relevant Gramians are the reachability Gramian $\BP$ of $\Sys$ that solves~\cref{eq:reach_lyap}, and the \emph{minimal} (stabilizing) solution $\BQbst^{-}\in\Rnn$ to the \ARE{}
\begin{align} 
\begin{split}
    \label{eq:bst_riccati}
        \BA^{\trans}&\BQbst + \BQbst\BA  +\\
        &\left(\BC-\BB_\CW^{{\trans}}\BQbst\right)^{{\trans}}\left(\BD\BD^{{\trans}}\right)^{-1}\left(\BC-\BB_\CW^{{\trans}}\BQbst\right)=\Bzero,
\end{split}
\end{align}
where $\BB_\CW\coloneqq\BP\BC^{\trans} + \BB\BD^{\trans}\in\Rnm.$ 
Any solution $\BQbst\in\Rnn$ to~\cref{eq:bst_riccati} is \emph{not} unique.
The \emph{minimal} solution to~\cref{eq:bst_riccati} is the unique \SPD{} matrix $\BQbst^{-}$ that obeys the partial order $0\prec\BQbst^{-}\preceq\BQbst$ for all symmetric solutions $\BQbst$ of~\cref{eq:bst_riccati}; see~\cite[Theorem~13.11]{zhou1996robust}.
In \BST{}, $\BQbst^{-}$ takes the place of the usual observability Gramian.
In order to derive a data-driven formulation of \BST{}, we show how it fits in the \GQBT{} framework of Section~\ref{sec:gen_quadbt}.
Specifically, we interpret $\BP$ and $\BQbst^{-}$ as the reachability and observability Gramians of some appropriately chosen linear systems $\CX$ and $\CY$.
Obviously, $\BPgen=\BP$ is the reachability Gramian of the system being approximated, and solves~\cref{eq:gen_reach_lyap} for $\CX=\Sys$ with $\BBgenX=\BB$.
To interpret $\BQbst^{-}$, we introduce the following spectral factorization.
By~\cite[Corollary~13.28]{zhou1996robust} there exists a \emph{minimal phase} \emph{right spectral factor} $\BW(s)\in\C^{m\times m}$ of $\BG(s)\BG(-s)^{\trans}$, i.e.,
$\BG(s)\BG(-s)^{\trans} = \BW(-s)^{\trans}\BW(s)$ for all $s\in\C$.
Moreover, $\BW(s)$ is the rational transfer function of a minimal, asymptotically stable linear system 
\begin{equation}
\label{eq:bst_spectral_fact}
    \CW=\left(\BA, \BB_\CW, \BC_\CW, \big(\BD\BD^{\trans}\big)^{1/2}\right),
\end{equation}
where $\BB_\CW=\BP\BC^{\trans} + \BB\BD^{\trans}$ as before and $\BC_\CW\in\R^{m\times n}$ is defined as
\begin{align} 
    \begin{split}
    \label{eqn:BwCw}
    \BC_{\CW}=\left(\BD\BD^{\trans}\right)^{1/2}\left(\BC-\BB_{\CW}^{\trans}\BQbst^{-}\right)\in\R^{m\times n}.
    \end{split}
\end{align}
Explicitly, $\BW(s)$ is written as
\begin{equation}
\label{eq:bst_spectral_fact_tf}
    \BW(s)=\BC_\CW\left(s\BI_n-\BA\right)^{-1}\BB_\CW+ \big(\BD\BD^{\trans}\big)^{1/2}.
\end{equation}
We call $\BW(s)$ minimum phase because the solution $\BQbst^{-}$ to~\cref{eq:bst_riccati} used in its construction is the minimal solution of the \ARE{}.
Fixing the right-hand side of the \ARE{}~\cref{eq:bst_riccati} with $\BQbst=\BQbst^{-}$ yields
\begin{align}
    \nonumber
    \BA^{{\trans}}\BQbst + \BQbst\BA \hphantom{\left(\BD\BD^{{\trans}}\right)^{-1}\left(\BC-\BB_\CW^{{\trans}}\BQbst^{-}\right)}~~~~& \\
    \nonumber
        +\underbrace{\left(\BC-\BB_\CW^{{\trans}}\BQbst^{-}\right)^{{\trans}}\left(\BD\BD^{{\trans}}\right)^{-1}\left(\BC-\BB_\CW^{{\trans}}\BQbst^{-}\right)}_{\textstyle
    \begin{array}{c}=\BC_\CW\BC_\CW^{\trans}\end{array}}&=\Bzero,\\
    \label{eq:gen_obsv_lyap_bst}
    \implies~~\BA^{{\trans}}\BQbst+ \BQbst\BA +
        \BC_{\CW}^{\trans}\BC_{\CW}&=\Bzero,
\end{align}
which is an \ALE{} that fits the structure of~\cref{eq:gen_obsv_lyap} for the choice of $\BCgenY=\BC_\CW$ in~\cref{eqn:BwCw}. Moreover,~\cref{eq:gen_obsv_lyap_bst} is uniquely solved by $\BQbst^{-}$.
Thus,~\cref{eq:gen_obsv_lyap_bst} is the \emph{observability Lyapunov equation} of $\CW$ defined in~\cref{eq:bst_spectral_fact}---the linear system associated with the right spectral factor $\BW(s)$---and $\BQgenY=\BQbst^{-}$ is the \emph{observability Gramian} of $\CY=\CW$.

At this point, Theorem~\ref{thm:gen_quadbt} and the \GQBT{} framework of Section~\ref{sec:gen_quadbt} can be applied to derive a data-driven formulation of \BST{}.
We describe the application explicitly in this instance because it is our first time doing so.
Because the relevant Gramians uniquely satisfy \ALE{}s~\cref{eq:reach_lyap} and~\cref{eq:gen_obsv_lyap_bst}, $\BP$ and $\BQbst^{-}$ have integral representations~\cref{eq:reach_gram} and~\cref{eq:obsvgram}.
In particular, $\BQbst^{-}$ can be written as
\begin{align*}
    \BQbst^{-} = \frac{1}{2\pi}&\int_{\imunit \R}\left(\BD^{-1}\left(\BC-\BB_{\CW}^{\trans}\BQbst^{-}\right)
    \left(z\BI_n - \BA\right)^{-1}\right)^{\herm}\times\\
    &\BD^{-1}\left(\BC-\BB_{\CW}^{\trans}\BQbst^{-}\right)
    \left(z\BI_n - \BA\right)^{-1}\,dz.
\end{align*}
From this representation, 
$\BQbst^{-}$ is decomposed via the quadrature-based square-root factor in~\cref{eq:gen_cholL}, i.e.,
\[\big(\wtL_\CW^{\herm}\big)_k=
    \phi_k \BD^{-1}\left(\BC-\BB_{\CW}^{\trans}\BQbst^{-}\right)(\imunit\omega_k \BI_n -\BA)^{-1},\]
for $k=1,\ldots K$.
The quadrature-based factor of $\BP$ is derived similarly, and given by~\cref{eq:gen_cholU} for $\BBgenX=\BB$.
Using these quadrature-based factorizations to define the data matrices in~\cref{eq:gen_loewner_quantities} and Theorem~\ref{thm:gen_quadbt} yields a data-driven formulation of \BST{}.
Theorem~\ref{thm:bst_from_data} shows how to interpret the transfer functions $\BGforEA(s)$, $\BGforB(s)$, and $\BGforC(s)$ in~\cref{eq:GforEA}---\cref{eq:GforC} for this data-driven \BST{} in terms of $\BG(s)$ and the spectral factor $\BW(s)$.

\begin{theorem}
    \label{thm:bst_from_data}
    Let $\BQbst^{-}\in\Rnn$ be the minimal solution to~\cref{eq:bst_riccati} and $\BP\in\Rnn$ be the reachability Gramian of $\Sys$ that solves~\cref{eq:reach_lyap}.
    Then, the transfer functions
    $\BGforEA(s),$ $\BGforB(s)$, and $\BGforC(s)$ defined as in~\cref{eq:GforEA}---\cref{eq:GforC} of Theorem~\ref{thm:gen_quadbt} and Algorithm~\ref{alg:gen_quadbt} are
    \begin{align}  
        \label{eq:GforB_C_bst}
        \BGforC(s) &= {\BG_\infty(s)},~\mbox{and}  \\
        \label{eq:GforA_E_bst}
    \BGforEA(s) &=\BGforB(s) = \left[\left(\BW(-s)^{\trans}\right)^{-1}\BG_\infty(s)\right]_+,
    \end{align}
    where $\BG(s)$ and $\BW(s)$ are defined as in~\cref{eq:tf} and~\cref{eq:bst_spectral_fact_tf}.
\end{theorem}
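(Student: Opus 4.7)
The plan is to first unpack Theorem~\ref{thm:gen_quadbt} in the \BST setting and then to establish~\eqref{eq:GforA_E_bst} through a state-space decomposition built around the minimal phase factor $\BW(s)$. From the discussion preceding the theorem, \BST corresponds to $\CX=\Sys$ and $\CY=\CW$, so $\BB_\CX=\BB$ and $\BC_\CY=\BC_\CW$. Substituting into~\eqref{eq:GforEA}--\eqref{eq:GforC} immediately gives $\BGforC(s)=\BG_\infty(s)$ and $\BGforEA(s)=\BGforB(s)=\BC_\CW(s\BI_n-\BA)^{-1}\BB$, which proves~\eqref{eq:GforB_C_bst} and reduces the theorem to the identity
\begin{equation*}
\BC_\CW(s\BI_n-\BA)^{-1}\BB=\bigl[(\BW(-s)^\tr)^{-1}\BG_\infty(s)\bigr]_+.
\end{equation*}

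To prove this identity I would construct a state-space realization of $(\BW(-s)^\tr)^{-1}\BG_\infty(s)$ and decompose it into stable and anti-stable parts. From the realization~\eqref{eq:bst_spectral_fact} of $\BW(s)$, duality yields a realization $(-\BA^\tr,-\BC_\CW^\tr,\BB_\CW^\tr,\BD)$ of $\BW(-s)^\tr$, and the standard inverse formula (valid since $\BD$ is nonsingular) gives a realization $(\wt{\BA},\wt{\BB},\wt{\BC},\BD^{-1})$ of $(\BW(-s)^\tr)^{-1}$ with
\begin{equation*}
\wt{\BA}=-\BA^\tr+\BC_\CW^\tr\BD^{-1}\BB_\CW^\tr,\quad \wt{\BB}=-\BC_\CW^\tr\BD^{-1},\quad \wt{\BC}=-\BD^{-1}\BB_\CW^\tr.
\end{equation*}
Cascading with $\BG_\infty(s)$ produces a block-triangular realization whose $(1,1)$ state block is $\wt{\BA}$, $(2,2)$ block is $\BA$, off-diagonal coupling is $\wt{\BB}\BC$, and whose input enters only through the $\BA$ subsystem. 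Block-diagonalizing this state matrix via a unit upper-triangular similarity with off-diagonal block $\BX$ reduces to solving the Sylvester equation
\begin{equation*}
\BX\BA-\wt{\BA}\BX=-\wt{\BB}\BC=\BC_\CW^\tr\BD^{-1}\BC.
\end{equation*}

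The crux is showing that $\BX=-\BQbst$ solves this equation. Combining the Lyapunov form~\eqref{eq:gen_obsv_lyap_bst} of the \BST \ARE with the defining identity $\BD\BC_\CW=\BC-\BB_\CW^\tr\BQbst$, a short calculation yields
\begin{equation*}
-\BQbst\BA+\wt{\BA}\BQbst=\BC_\CW^\tr\BC_\CW+\BC_\CW^\tr\BD^{-1}\BB_\CW^\tr\BQbst=\BC_\CW^\tr\BD^{-1}(\BD\BC_\CW+\BB_\CW^\tr\BQbst)=\BC_\CW^\tr\BD^{-1}\BC,
\end{equation*}
as required. The same identity makes the transformed output matrix collapse to $[\wt{\BC}\;\;\BC_\CW]$ via $\wt{\BC}\BQbst+\BD^{-1}\BC=\BC_\CW$, and the transformed input consists of $-\BQbst\BB$ atop $\BB$. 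The cascade therefore decomposes as
\begin{equation*}
(\BW(-s)^\tr)^{-1}\BG_\infty(s)=-\wt{\BC}(s\BI_n-\wt{\BA})^{-1}\BQbst\BB+\BC_\CW(s\BI_n-\BA)^{-1}\BB,
\end{equation*}
where the first summand is strictly proper with anti-stable poles---the spectrum of $\wt{\BA}$ coincides with the zeros of $\BW(-s)^\tr$, which lie in the open right half-plane by the minimal phase property of $\BW(s)$---while the second is strictly proper and stable. Extracting the purely stable part yields~\eqref{eq:GforA_E_bst}.

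The main obstacle is the algebraic bookkeeping that verifies the Sylvester equation and simplifies the transformed output matrix; both steps hinge on the interplay between $\BD\BC_\CW=\BC-\BB_\CW^\tr\BQbst$ and~\eqref{eq:gen_obsv_lyap_bst}. Conceptually, the minimal phase property of $\BW(s)$ is exactly what forces $\wt{\BA}$ to be anti-stable, so that the block-diagonal decomposition above is precisely the stable/anti-stable splitting implied by $[\,\cdot\,]_+$.
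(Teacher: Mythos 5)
Your proposal is correct and follows essentially the same route as the paper's proof: after reducing the claim to $\BC_\CW(s\BI_n-\BA)^{-1}\BB=\bigl[(\BW(-s)^\tr)^{-1}\BG_\infty(s)\bigr]_+$, you build the cascaded realization of $(\BW(-s)^\tr)^{-1}\BG_\infty(s)$ and decouple it with the block upper-triangular state transformation whose off-diagonal block is $-\BQbst$, using the rearranged \ARE~\eqref{eq:bst_riccati} together with $\BD\BC_\CW=\BC-\BB_\CW^\tr\BQbst$, exactly as in the paper (up to an equivalent sign convention in the realization of the inverse spectral factor). Your explicit justification that the $(1,1)$ block is antistable via the minimum-phase property is a nice touch the paper only asserts.
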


\begin{proof}{Proof of Theorem~\ref{thm:bst_from_data}.}
    For \BST, $\BBgenX = \BB$ and $\BCgenY=\BC_\CW$ in~\cref{eqn:BwCw}.
    Thus, by definition of $\BGforC(s)$ in~\cref{eq:GforC}
    \begin{equation*}
        \BGforC(s)=\BC(s\BI_n-\BA)^{-1}\BBgenX=\BC(s\BI_n-\BA)^{-1}\BB=\BG_\infty(s),
    \end{equation*}
    proving~\cref{eq:GforB_C_bst}.
    To prove~\cref{eq:GforA_E_bst}, first note that by definition of~\cref{eq:GforEA} and~\cref{eq:GforB}
    \begin{equation*}
        \BGforEA(s) = \BGforB(s)=\BC_\CW(s\BI_n-\BA)^{-1}\BB,
    \end{equation*}
    because $\BCgenY=\BC_\CW$. Thus, showing~\cref{eq:GforA_E_bst} amounts to proving that
    \begin{equation*}
        \BC_\CW(s\BI_n-\BA)^{-1}\BB    =\left[\left(\BW(-s)^{\trans}\right)^{-1}\BG_\infty(s)\right]_+.    
    \end{equation*}
    From the state-space realizations of $\CW$ in~\cref{eq:bst_spectral_fact} and $\Sys$
    in~\cref{eq:sys}, we calculate a realization of the cascaded system with the transfer function $\left(\BW(-s)^{\trans}\right)^{-1}\BG_\infty(s)$:
    \begin{align}  \label{eq:WG}
    \left(\begin{array}{ccccc|ccc}
         & -\BA^{\trans} + \BC_\CW^{\trans}\BD^{-1}\BB_\CW^{\trans} &  & \BC_\CW^{\trans}\BD^{-1}\BC & & & \Bzero_{n\times m} &\\ 
        & \Bzero_{n\times n} &  & \BA & & & \BB &\\ 
         \hline
        & \BD^{-1}\BB_\CW^{\trans} &  &\BD^{-1}\BC & & & \Bzero_{m\times m} &
    \end{array}\right),\end{align}
    see, e.g.,~\cite[Sec.~3.6]{zhou1996robust}.
    The \ARE in~\cref{eq:bst_riccati} can be rearranged into the form
    \begin{align*} 
        &\left(-\BA^{\trans} + \BC_\CW^{\trans}\BD^{-1}\BB_\CW^{\trans}\right)\left(-\BQbst^{-}\right) \\
        &~~~+ \BA\BQbst^{-} 
            + \BC_\CW^{\trans}\BD^{-1}\BC=\Bzero.
    \end{align*}
    Using this reformulation, it becomes clear that the state-space transformation 
    \begin{equation*}
        \BT = \begin{bmatrix}
        \hphantom{-}\BI_n\hphantom{-} & -\BQbst^{-}\hphantom{-}\\
        \Bzero_n & \BI_n
    \end{bmatrix}\in\R^{2n\times 2n}
    \end{equation*}
    decouples the cascaded system realization~\cref{eq:WG}. In other words, the transformed realization satisfies
    \begin{align} \label{eq:WGdiag}
    \left(\begin{array}{ccccc|ccc}
         & -\BA^{\trans} + \BC_\CW^{\trans}\BD^{-1}\BB_\CW^{\trans} &  & \Bzero_{n \times n} & & &\BQbst^{-}\BB&\\ 
        & \Bzero_{n\times n} &  & \BA & & & \BB &\\ 
         \hline
        & \BD^{-1}\BB_\CW^{\trans} &  & \BC_\CW & & & \Bzero_{m\times m} &
    \end{array}\right).\end{align}
    Note that the $(1,1)$ block of the cascaded system is purely antistable, i.e., its poles lie in the open right half-plane, and the (2,2) block is purely stable.
    Because $\left(\BW(-s)^{\trans}\right)^{-1}\BG_\infty(s)$ is the transfer function of~\cref{eq:WGdiag}, it can be written as
    \begin{align*}
         \left(\BW(-s)^{\trans}\right)^{-1}\BG_\infty(s) &= \ \star \ + \  \underbrace{\BC_\CW(s\BI_n-\BA)^{-1}\BB}_{=\BGforEA(s)=\BGforB(s)} \hot{,}
    \end{align*}
   where the unspecified $\star$ corresponds to the \emph{anti-stable} part of  $\left(\BW(-s)^{\trans}\right)^{-1}\BG(s)$. 
    Thus, both $\BGforEA(s)$ and $\BGforB(s)$ are equivalent to $\left[\left(\BW(-s)^{\trans}\right)^{-1}\BG_\infty(s)\right]_+$, as claimed in~\cref{eq:GforA_E_bst}.
\end{proof}

In conjunction, Theorem~\ref{thm:gen_quadbt} and Theorem~\ref{thm:bst_from_data} provide the theoretical foundation for a data-driven formulation of \BST.
We call this approach \emph{quadrature-based \BST} (\QBST).
Algorithm~\ref{alg:gen_quadbt} yields \QBST when the transfer functions $\BGforC(s)$ and $\BGforEA(s)$, $\BGforB(s)$ to be sampled are chosen as in~\cref{eq:GforB_C_bst} and \cref{eq:GforA_E_bst}, respectively.
We emphasize that these results form the theoretical formulation for \QBST{}.
Computationally, it is not clear how to measure the spectral factor $\BW(s)$ outside of intrusive numerical evaluation using the state-space matrices $\BA, \BB,\BC,$ and $\BD$, and a computed minimal solution $\BQbst^{-}$ to~\cref{eq:bst_riccati}.
Still, the transfer functions in Theorem~\ref{thm:bst_from_data} are input-output invariants; once these data are computed, the resulting data-based reduced model computed by Algorithm~\ref{alg:gen_quadbt} does not refer to any internal quantities.
This will be the case for the data-driven formulations of \PRBT and \BRBT that we derive next, as well.
\subsection{\PRBT from data: \QPRBT}
\label{ss:qprbt}

Again, suppose that $\Sys$ is square. A closely related method to that of \BST is \emph{positive-real balanced truncation} (\PRBT), also introduced in~\cite{green1988balanced}.
Passive systems are ubiquitous in applications of physics and engineering; electrical circuits are one such example.
Passive systems also have \emph{port-Hamiltonian} structure, an important focus of recent work in the modeling community~\cite{mehrmann2023control}. 
For $s\in\C$ that is not an eigenvalue of either $\BA$ or $-\BA$, the \emph{Popov function} of $\Sys$ is defined as
\begin{equation}
\label{eq:popov}
    \Phi(s) \coloneqq \BG(s) + \BG(-s)^{\trans},~~s\in\C.
\end{equation}
A system being passive is equivalent to its transfer function being \emph{positive real}~\cite[Theorem~5.30]{antoulas2005approximation}, i.e.,
\begin{align}
    \label{eq:pr_con}
    \Phi(i\omega) = \BG(\imunit \omega) + \BG(-\imunit\omega)^{\trans} \succeq0, \quad \omega\in\R.
\end{align}
$\Sys$ is \emph{strictly} positive-real if the inequality~\cref{eq:pr_con} is strict. 
For simplicity, we consider only strictly positive-real systems in this work, and take positive-realness to mean \emph{strict} positive-realness moving forward.
\PRBT is primarily utilized for the model reduction of \emph{passive} linear systems. 
\PRBT-\ROM{}s are guaranteed to maintain the asymptotic stability \emph{and} passivity of the \FOM.
Additionally, there exists a relative type of bound on the $\CH_\infty$ reduction error; see~\cite[Lemma~3]{gugercin2004survey}.

Suppose that $\BR\coloneqq\BD+\BD^{\trans}\succ 0$. By~\cite[Corollary~13.27]{zhou1996robust},
an asymptotically stable linear system $\Sys$ is strictly positive real if and only if there exists a (\SPD) stabilizing solution $\BQprbt\in\Rnn$ to the \ARE
\begin{align}
\begin{split}
\label{eq:prbt_obsv_riccati}
    \BA&^{\trans}\BQprbt + \BQprbt\BA + \\
    &(\BC-\BB^{\trans}\BQprbt)^{\trans}(\BD+\BD^{\trans})^{-1}(\BC-\BB^{\trans}\BQprbt)=\Bzero.
\end{split}
\end{align}
From the dual statement of~\cite[Corollary~13.27]{zhou1996robust}, there exists an \SPD stabilizing solution $\BPprbt\in\Rnn$ to the dual \ARE{}
\begin{align}
\begin{split}
    \label{eq:prbt_reach_riccati}
    \BA&\BPprbt + \BPprbt\BA^{\trans} + \\
    &(\BB-\BPprbt\BC^{\trans})(\BD+\BD^{\trans})^{-1}(\BB-\BPprbt\BC^{\trans})^{\trans} =\Bzero,
\end{split}
\end{align}
We refer to~\cref{eq:prbt_obsv_riccati} and~\cref{eq:prbt_reach_riccati} as the \emph{positive-real \ARE{}s} (\PRARE{}s).
The relevant Gramians in \PRBT{} are given by the \emph{minimal} solutions $\BQprbt^{-}\in\Rnn$ and $\BPprbt^{-}\in\Rnn$ to the \PRARE{}s~\cref{eq:prbt_obsv_riccati} and~\cref{eq:prbt_reach_riccati}.

For the data-based formulation of \PRBT{} that we derive, we require spectral factorizations of the Popov function~\cref{eq:popov}.
By~\cite[Corollary~13.27]{zhou1996robust}, the Popov function $\Phi(s)$ in~\cref{eq:popov} has a minimum phase left spectral factor $\BM(s)\in\Cmm$, i.e.
\begin{align*}
\Phi(s) = \BG(s) + \BG(-s)^{\trans} = \BM(-s)^{\trans}\BM(s),
\end{align*}
for all $s\in\C$.
The spectral factor $\BM(s)$ is the rational transfer function of the asymptotically stable and minimal linear system 
\begin{align}
\label{eq:prbt_M_spectral_fact}
    \CM&=\left(\BA, \BB, \BC_\CM, \BR^{1/2}\right)\\
\label{eq:prbt_M_spectral_fact_tf}
    \mbox{with}~~\BM(s)&=\BC_{\CM}(s\BI_n-\BA)^{-1}\BB + \BR^{1/2},
\end{align}
where $\BR=\BD+\BD^{\trans}$. The output matrix $\BC_\CM\in\Rmm$ of $\CM$ is defined as
\begin{align}
    \label{eq:Cm}
    \BC_{\CM}\coloneqq&~\BR^{-1/2}\left(\BC-\BB^{\trans}\BQprbt^{-}\right), 
\end{align}
where $\BQprbt^{-}$ is the minimal solution of~\cref{eq:prbt_obsv_riccati}.
Applying~\cite[Corollary~13.27]{zhou1996robust} to the dual of $\Sys$, one obtains a minimal phase right spectral factor $\BN(s)$ of $\Phi(s)$, i.e.,
\begin{align*}
\Phi(s) = \BG(s) + \BG(-s)^{\trans} = \BN(s)\BN(-s)^{\trans},
\end{align*}
for all $s\in\C$.
The spectral factor $\BN(s)$ is the rational transfer function of the asymptotically stable and minimal linear system 
\begin{align}
    \label{eq:prbt_N_spectral_fact}
    \CN&=\left(\BA, \BB_\CN, \BC, \BR^{1/2}\right)\\
    \label{eq:prbt_N_spectral_fact_tf}
    \mbox{with}~~\BN(s)&=\BC(s\BI_n-\BA)^{-1}\BB_\CN + \BR^{1/2}.
\end{align}
The input matrix $\BB_\CN\in\Rmm$ of $\CN$ is defined as
\begin{align}
    \label{eq:Bn}
    \BB_{\CN}\coloneqq\left(\BB-\BPprbt^{-}\BC^{\trans}\right)\BR^{-1/2},
\end{align}
where $\BPprbt^{-}$ is the minimal solution of~\cref{eq:prbt_reach_riccati}.
With these spectral factors, we can now apply the \GQBT{} framework of Section~\ref{sec:gen_quadbt} to derive a data-driven formulation of \PRBT{}.
Fixing $\BQprbt^{-}$ and $\BPprbt^{-}$ in the right-hand side of~\cref{eq:prbt_obsv_riccati} and~\cref{eq:prbt_reach_riccati} yields
\begin{align}
    \label{eq:gen_obsv_lyap_prbt}
    \BA^{{\trans}}\BQprbt + \BQprbt\BA +
    \BC_{\CM}^{\trans}\BC_{\CM}&=\Bzero\\
    \label{eq:gen_reach_lyap_prbt}
    \mbox{and}\quad\BA\BPprbt + \BPprbt\BA^{{\trans}} +
    \BB_{\CN}\BB^{{\trans}}_{\CN}&=\Bzero
    ,
\end{align}
which are the \ALE{}s in~\cref{eq:gen_obsv_lyap} and~\cref{eq:gen_reach_lyap} for the choice of $\BCgenY=\BC_\CM$ in~\cref{eq:Cm} and $\BBgenX=\BB_\CN$ in~\cref{eq:Bn}.
The \ALE{}s~\cref{eq:gen_obsv_lyap_prbt} and~\cref{eq:gen_reach_lyap_prbt} are uniquely solved by $\BQprbt^{-}$ and $\BPprbt^{-}$.
Equation~\cref{eq:gen_obsv_lyap_prbt} is the \emph{observability Lyapunov equation} of the linear system $\CY=\CM$ in~\cref{eq:prbt_M_spectral_fact} associated with the spectral factor $\BM(s)$, and $\BQgen=\BQprbt^{-}$ is thus the \emph{observability Gramian} of $\CM$.
Similarly, 
equation~\cref{eq:gen_reach_lyap_prbt} is the \emph{reachability Lyapunov equation} of the linear system $\CX=\CN$ in~\cref{eq:prbt_N_spectral_fact} associated with the spectral factor $\BN(s)$, and $\BPgen=\BPprbt^{-}$ is thus the \emph{reachability Gramian} of $\CN$.
 
Because $\BPgenX=\BPprbt^{-}$ and $\BQgenY=\BQprbt^{-}$ uniquely satisfy the \ALE{}s~\cref{eq:gen_reach_lyap_prbt} and~\cref{eq:gen_obsv_lyap_prbt}, these Gramians have integral representations~\cref{eq:reach_gram} and~\cref{eq:obsvgram}, and can be decomposed into the quadrature-based square-root factors~\cref{eq:gen_cholU} and~\cref{eq:gen_cholL} for $\BBgenX=\BB_\CN$ and $\BCgenY=\BC_\CM$ in~\cref{eq:Bn} and~\cref{eq:Cm}.
Theorem~\ref{thm:gen_quadbt} and Algorithm~\ref{alg:gen_quadbt}
can thus be applied in this instance to yield a data-driven formulation of \PRBT.
Similar to \BST, the transfer functions $\BGforEA(s)$, $\BGforB(s)$, and $\BGforC(s)$ can be understood in terms of the spectral factors $\BM(s)$ and $\BN(s)$ of the Popov function associated with $\BG(s)$.

\begin{theorem}
    \label{thm:prbt_from_data}
    Let $\BQprbt^{-}\in\Rnn$ and $\BPprbt^{-}\in\Rnn$ be the minimal solutions to~\cref{eq:prbt_obsv_riccati} and~\cref{eq:prbt_reach_riccati}.
    Then, the transfer functions $\BGforEA(s),$ $\BGforB(s)$, and $\BGforC(s)$ defined in~\cref{eq:GforEA}---\cref{eq:GforC} of Theorem~\ref{thm:gen_quadbt} {(and Algorithm~\ref{alg:gen_quadbt})} are given by:
    \begin{align}
        \BGforEA(s) &= \left[\left(\BM(-s)^{\trans}\right)^{-1}\BN_\infty(s)\right]_+, \label{eq:GforA_E_prbt}\\ 
        \BGforB(s) &= \BM_\infty(s),\quad
        \BGforC(s) = \BN_\infty(s), \label{eq:GforB_C_prbt}
    \end{align}
    where $\BM(s)$ and $\BN(s)$ are defined as in~\cref{eq:prbt_M_spectral_fact_tf} and~\cref{eq:prbt_N_spectral_fact_tf}.
\end{theorem}

\begin{proof}{Proof of Theorem~\ref{thm:prbt_from_data}.}
    In this case, $\BB_\BX = \BB_\CN,$ and $\BCgenY=\BC_\CM$ as in~\cref{eq:Bn} and~\cref{eq:Cm}. 
    So, from~\cref{eq:GforB} and~\cref{eq:GforC}
    \begin{align*}
        \BGforB(s)&=\BC_\CM(s\BI_n-\BA)^{-1}\BB=\BM_\infty(s),\\
        \BGforC(s)&=\BC(s\BI_n-\BA)^{-1}\BB_\CN=\BN_\infty(s),
    \end{align*}
    thus proving~\cref{eq:GforB_C_prbt}.
    The claim in~\cref{eq:GforA_E_prbt} follows nearly identically from the argument of Theorem~\ref{thm:bst_from_data} by replacing $\BW(s)$ with $\BM(s)$ and $\BG_\infty(s)$ with $\BN_\infty(s)$.
\end{proof}

As was the case for \BST, Theorem~\ref{thm:gen_quadbt} and Theorem~\ref{thm:prbt_from_data} provide the theoretical foundation for a data-driven implementation of \PRBT, that we call \emph{quadrature-based \PRBT} (\QPRBT). Algorithm~\ref{alg:gen_quadbt} yields \QPRBT when 
the transfer functions $\BGforEA(s)$ and $\BGforB(s)$, $\BGforC(s)$ to be sampled are replaced with those in~\cref{eq:GforA_E_prbt} and \cref{eq:GforB_C_prbt}.

\subsection{\BRBT from data: \QBRBT}
\label{ss:qbrbt}
The last variant studied here is \emph{bounded-real balanced truncation} (\BRBT)~\cite{opdenacker1988contraction}.
An important class of systems is those having transfer functions that are bounded along the imaginary axis; such systems are used in parameterizing all stabilizing controllers of a system such that the closed-loop system satisfies a particular $\CH_\infty$ constraint~\cite{glover1988state}. These systems are called \emph{bounded-real}. Formally, an asymptotically system $\Sys$ is called bounded-real if its transfer function $\BG(s)$ satisfies
\begin{align}
    \label{eq:br_con}
    \gamma^2\BI_m - \BG(-\imunit\omega)^{\trans}\BG(\imunit\omega) \succeq 0,\quad \omega\in\R,
\end{align}
where $\gamma \coloneqq \|\Sys\|_{\CH_\infty}$. Condition~\cref{eq:br_con} can alternatively be posed as $\|\Sys\|_{\CH_\infty}\leq \gamma$. Since it is always possible to normalize $\BG(s)$ so that $\|\Sys\|_{\CH_\infty}\leq 1$, without loss of generality, we assume that $\gamma=1$ in this work.
A system is \emph{strictly} bounded-real if the inequality~\cref{eq:br_con} is strict; moving forward, we assume strict bounded-realness, and take bounded-realness to mean strict bounded-realness. \BRBT-\ROM{}s preserve the asymptotic stability and bounded-realness of the \FOM, as well as satisfy an \emph{a priori} error bound on the absolute $\CH_\infty$ error~\cite{opdenacker1988contraction}.

Suppose that $\BI_m-\BD^{\trans}\BD \succ 0$.
According to the Bounded-real Lemma~\cite[Corollary~13.24]{zhou1996robust}, $\Sys$ is strictly bounded-real if and only if there exists a \SPD stabilizing solution $\BQbrbt\in\Rnn$ to the \ARE
\begin{align}
\begin{split}
    \label{eq:brbt_obsv_riccati}
    \BA^{\trans}&\BQbrbt + \BQbrbt\BA + \BC^{\trans}\BC +
    \left(\BB^{\trans}\BQbrbt+\BD^{\trans}\BC\right)^{\trans}\times\\
    &\left(\BI_m-\BD^{\trans}\BD\right)^{-1}\left(\BB^{\trans}\BQbrbt+\BD^{\trans}\BC\right) =\Bzero.
\end{split}
\end{align}
Suppose now that $\BI_p-\BD\BD^{\trans}\succ 0$.
By the dual result of~\cite[Corollary~13.24]{zhou1996robust}, there exists a
\SPD stabilizing solution $\BPbrbt\in\Rnn$
 \begin{align}
 \begin{split}
    \label{eq:brbt_reach_riccati}
    \BA&\BPbrbt + \BPbrbt\BA^{\trans} + \BB\BB^{\trans} +\left(\BPbrbt\BC^{\trans} + \BB\BD^{\trans}\right)\times\\
    &\left(\BI_p-\BD\BD^{\trans}\right)^{-1}\left(\BPbrbt\BC^{\trans} + \BB\BD^{\trans}\right)^{\trans}=\Bzero.
\end{split}
\end{align}
We refer to~\cref{eq:brbt_obsv_riccati} and~\cref{eq:brbt_reach_riccati} as the \emph{bounded-real \ARE{}s} (\BRARE{}s).
Similar to \PRBT{}, the relevant Gramians in \BRBT{} are given by the \emph{minimal} solutions $\BQbrbt^{-}\in\Rnn$ and $\BPbrbt^{-}\in\Rnn$ to the \BRARE{}s~\cref{eq:brbt_obsv_riccati} and~\cref{eq:brbt_reach_riccati}.

Suppose $\Sys$ is strictly bounded-real. Define the matrices $\BRj\coloneqq\BI_m - \BD^{\trans}\BD\in\Rmm$ and $\BRk\coloneqq\BI_p - \BD\BD^{\trans}\in\Rpp$, and assume that $\BRj$ and $\BRk$ are \SPD{}.
Let $\BQbrbt^{-}$ and $\BPbrbt^{-}$ be the minimal solutions to the \BRARE{}s~\cref{eq:brbt_obsv_riccati} and~\cref{eq:brbt_reach_riccati}, respectively.
By~\cite[Corollary~13.21]{zhou1996robust}, there exists $\BJ(s)\in\Cmm$ such that $\BJ(s)$ is a minimal phase left spectral factor of $\BI_m-\BG(-s)^{\trans}\BG(s)$, i.e.
$$\BI_m-\BG(-s)^{\trans}\BG(s)=\BJ(-s)^{\trans}\BJ(s),$$
for all $s\in \C$
The spectral factor $\BJ(s)$ is the rational transfer function of the asymptotically stable and minimal system $\CJ$ determined by the quadruple $\big(\BA,\BB,\BCbrbt,\BRj^{1/2}\big)$, where
\begin{align}
    \label{eq:Cs}
   \BCbrbt\coloneqq&-\BRj^{-1/2}\left(\BB^{\trans}\BQbrbt^{-} + \BD^{\trans}\BC\right),\\
    \label{eq:S_spectral_fact}
    \BJ(s) =&~\BCbrbt(s\BI_n-\BA)^{-1}\BB+\BRj^{1/2},
\end{align}
and $\BQbrbt^{-}$ is the minimal solution of~\cref{eq:brbt_obsv_riccati}.
Similarly, by the dual result~\cite[Corollary~13.22]{zhou1996robust},
there exists a minimal phase right spectral factor $\BK(s)\in\Cpp$ of $\BI_p-\BG(s)\BG(-s)^{\trans}$, i.e.,
$$\BI_p-\BG(s)\BG(-s)^{\trans}=\BK(s)\BK(-s)^{\trans},$$
for $s\in\C$.
$\BK(s)$ is the transfer function of the asymptotically stable and minimal system $\CK$ determined by the quadruple $\big(\BA,\BBbrbt,\BC,\BRk^{1/2}\big)$, where
\begin{align}
    \label{eq:Bz}
    \BBbrbt\coloneqq&-\left(\BPbrbt^{-}\BC^{\trans} + \BB\BD^{\trans}\right)\BRk^{-1/2},\\
\label{eq:Z_spectral_fact}
\BK(s)=&~\BC(s\BI_n-\BA)^{-1}\BBbrbt+\BRk^{1/2},
\end{align}
and $\BPbrbt^{-}$ is the minimal solution of~\cref{eq:brbt_reach_riccati}.
The \BRARE{}s~\cref{eq:brbt_obsv_riccati} and~\cref{eq:brbt_reach_riccati} can now be interpreted as the observability and reachability Lyapunov equations of some linear systems $\CY$ and $\CX$ in~\cref{eq:gen_sys}. Although, unlike the previous cases of \BST and \PRBT, these systems are \emph{not} $\CJ$ and $\CK$ corresponding to the previously introduced factors.
Define the matrices:
\begin{align}
\begin{split}
\begin{alignedat}{2}
\label{eq:stacked_matrices}
    \hatBBbrbt &\coloneqq\begin{bmatrix}\BB & \BBbrbt\end{bmatrix}\in \R^{n\times 2m},~~~&&\hatBRk \coloneqq\begin{bmatrix}\BD & \BRk^{1/2}\end{bmatrix}\in \R^{p\times 2m},\\
    \hatBCbrbt &\coloneqq\begin{bmatrix}\BC \\ \BCbrbt\end{bmatrix}\in \R^{2p\times n},~~~&&\hatBRj \coloneqq\begin{bmatrix}\BD \\ \BRj^{1/2}\end{bmatrix}\in \R^{2p\times m}.\\
\end{alignedat}
\end{split}
\end{align}
Then, by fixing $\BQbrbt^{-}$ and $\BPbrbt^{-}$ in the right-hand side of the \BRARE{}s,~\cref{eq:brbt_obsv_riccati} and~\cref{eq:brbt_reach_riccati} become
\begin{align}
    \label{eq:refact_brbt_obsv_lyap}
    \BA^{\trans}\BQbrbt + \BA\BQbrbt + \hatBCbrbt^{\trans}\hatBCbrbt&=\Bzero\\
    \label{eq:refact_brbt_reach_lyap}
    \mbox{and}\quad\BA\BPbrbt + \BA^{\trans}\BPbrbt + \hatBBbrbt\hatBBbrbt^{\trans}&=\Bzero,
\end{align}
which agree with~\cref{eq:gen_obsv_lyap} and~\cref{eq:gen_reach_lyap} for the choice of $\BC_\CY=\hatBCbrbt$ and $\BB_\CX=\hatBBbrbt.$
Thus, we consider~\cref{eq:refact_brbt_obsv_lyap} to be the \emph{observability Lyapunov equation} of the asymptotically stable linear system $\CY=\wh\CJ$ defined by 
\begin{equation}
    \label{eq:brbt_Jhat_spectral_fact}
    \wh\CJ=\left(\BA,\BB, \hatBCbrbt, \hatBRj\right),
\end{equation}
and $\BQgen=\BQbrbt^{-}$ is the \emph{observability Gramian} of $\wh\CJ$. The transfer function $\wh\BJ(s)\in \C^{2p \times m}$ of $\wh\CJ$ is
\begin{align}
\label{eq:stacked_J}
    \wh\BJ(s)\coloneqq \hatBCbrbt(s\BI_n-\BA)^{-1}\BB + \hatBRj
        = \begin{bmatrix}
           \BG(s) \\ \BJ(s)
        \end{bmatrix}.
\end{align}    
Similarly,~\cref{eq:refact_brbt_reach_lyap} is the \emph{reachability Lyapunov equation} of the asymptotically stable linear system $\CX=\wh\CK$ defined by 
\begin{equation}
    \label{eq:brbt_Khat_spectral_fact}
    \wh\CK=\left(\BA,\hatBBbrbt, \BC, \hatBRk\right),
\end{equation}
and $\BPgen=\BPbrbt$ is the \emph{reachability Gramian} of $\wh\CK$.
The transfer function $\wh\BK(s)\in \C^{p \times 2m}$ of $\wh\CK$ is
\begin{align}
\label{eq:stacked_K}
   \hspace{-1ex} \wh\BK(s)\coloneqq \BC(s\BI_n-\BA)^{-1}\hatBBbrbt +\hatBRk
                = \begin{bmatrix}
                    \BG(s) & \BK(s)
                \end{bmatrix}.
\end{align}   
This fits the \GQBT framework of Section~\ref{sec:gen_quadbt}. 
By asymptotic stability, the relevant Gramians $\BPgenX=\BPbrbt^{-}$ and $\BQgenY=\BQbrbt^{-}$ admit contour integral representations~\cref{eq:reach_gram} and~\cref{eq:obsvgram}, and can be decomposed into the quadrature-based square-root factors~\cref{eq:gen_cholU} and~\cref{eq:gen_cholL} for $\BBgenX=\hatBBbrbt$ and $\BCgenY=\hatBCbrbt$.
Once more, this sets the stage for the application of Theorem~\ref{thm:gen_quadbt}; the transfer functions $\BGforEA(s)$, $\BGforB(s)$, and $\BGforC(s)$ can be understood in terms of $\BJ(s)$, $\wh\BJ(s)$, $\BK(s)$, and $\wh\BK(s)$.

\begin{theorem}
    \label{thm:brbt_from_data}
    Let $\BQbrbt^{-}\in\Rnn$ and $\BPbrbt^{-}\in\Rnn$ be the minimal solutions to~\cref{eq:brbt_obsv_riccati} and~\cref{eq:brbt_reach_riccati}.
    Then the transfer functions $\BGforEA(s)$,  $\BGforB(s)$, and $\BGforC(s)$ defined in~\cref{eq:GforEA}---\cref{eq:GforC} of Theorem~\ref{thm:gen_quadbt} {(and Algorithm~\ref{alg:gen_quadbt})} are given by:
    \begin{align}
        \begin{split}\BGforEA(s) &= \left[\begin{bmatrix}
            \BI_p & \Bzero_{p\times m}\\
            \BG_\infty(-s)^{\trans} & \BJ_\infty(-s)^{\trans}
        \end{bmatrix}^{-1} \times \right.\\
        &\quad\quad\quad
        \left.
        \begin{bmatrix}
            \BG_\infty(s) & \BK_\infty(s)\\
            -\BD^{\trans}\BG_\infty(s) & -\BD^{\trans}\BK_\infty(s)
        \end{bmatrix}\right]_+, 
        \end{split}\label{eq:GforA_E_brbt}\\
        \BGforB(s) &= \wh\BJ_\infty(s),~~~~
        \BGforC(s) = \wh\BK_\infty(s),\label{eq:GforB_C_brbt}
    \end{align}
    where $\BJ(s)$, $\wh{\BJ}(s)$, $\BK(s)$, and $\wh{\BK}(s)$ are defined as in~\cref{eq:S_spectral_fact},~\cref{eq:stacked_J},~\cref{eq:Z_spectral_fact}, and~\cref{eq:stacked_K}.
\end{theorem}

\begin{proof}{Proof of Theorem~\ref{thm:brbt_from_data}.}
    Here, $\BBgenX=\hatBBbrbt$ and $\BCgenY=\hatBCbrbt$ defined in~\cref{eq:stacked_matrices}. So, from~\cref{eq:GforB} and~\cref{eq:GforC} we have that
    \begin{align*}
        \BGforB(s)&= \hatBCbrbt(s\BI_n-\BA)^{-1}\BB= \wh\BJ_\infty(s)\\
        \mbox{and}~~\BGforC(s)&=\BC(s\BI_n-\BA)^{-1}\hatBBbrbt= \wh\BK_\infty(s),
    \end{align*}
    proving~\cref{eq:GforB_C_brbt}.
   Next, define the matrices
   $\wh\BB \in \R^{n\times (p + m)}$, $\wh\BC \in\R^{(p+m)\times n}$, and $\wh\BR \in\R^{(p+m)\times(p+m)}$ by
      \begin{align*}
       \wh\BB\coloneqq\begin{bmatrix}
           \Bzero_{n\times p} & \BB
       \end{bmatrix},~
       \wh\BC\coloneqq\begin{bmatrix}
           \BC\\
           -\BD^{\trans}\BC
       \end{bmatrix},~
       \wh\BR\coloneqq\begin{bmatrix}
           \BI_p & \\
           & \BRj^{1/2}
       \end{bmatrix}.
   \end{align*} 
   It is straightforward to verify that the linear systems corresponding to the transfer functions
    \[\begin{bmatrix}
            \BI_p & \Bzero_{p\times m}\\
            \BG_\infty(-s)^{\trans} & \BJ_\infty(-s)^{\trans}
        \end{bmatrix}^{-1},~\begin{bmatrix}
            \BG_\infty(s) & \BK_\infty(s)\\
            -\BD^{\trans}\BG_\infty(s) & -\BD^{\trans}\BK_\infty(s)
        \end{bmatrix},\]
        have realizations given by $(-\BA^{\trans}+\hatBCbrbt^{\trans}\wh\BR^{-1}\wh\BB^{\trans}$, $-\hatBCbrbt^{\trans}\BR^{-1}$, $\wh\BR^{-1}\wh\BB^{\trans}$, $\wh\BR^{-1})$ and $(\BA, \hatBBbrbt, \wh\BC, \Bzero_{p+m})$, where $\hatBCbrbt$ and $\hatBBbrbt$ are defined as in~\cref{eq:stacked_matrices}. 
        Using~\cite[Sec.~3.6]{zhou1996robust}, the cascaded system on the right-hand side of the claimed equality in~\cref{eq:GforA_E_brbt} has a realization
        \begin{align}
        \label{eq:cascade2}
        \left(\begin{array}{ccccc|ccc}
             & -\BA^{\trans} + \hatBCbrbt^{\trans}\wh\BR^{-1}\wh\BB^{\trans} &  & \hatBCbrbt^{\trans}\wh\BR^{-1}\wh\BC & & & \Bzero_{n\times(p+m)} &\\ 
            & \Bzero_n &  & \BA & & & \hatBBbrbt &\\ 
             \hline
            & \wh\BR^{-1}\BB^{\trans} &  &\wh\BR^{-1}\wh\BC & & & \Bzero_{p+m} &
        \end{array}\right).\end{align}
        After some manipulations, the \ARE~\cref{eq:brbt_obsv_riccati} can be rearranged to be written as
        \begin{align*}
            \left(-\BA^{\trans} + \hatBCbrbt^{\trans}\wh\BR^{-1}\wh\BB^{\trans}\right)\left(-\BQbrbt^{-}\right) + \BA\BQbrbt^{-}+ \hatBCbrbt^{\trans}\wh\BR^{-1}\wh\BC=\Bzero.
        \end{align*}
        Using the reformulation of ~\cref{eq:brbt_obsv_riccati} written above, the state-space transformation 
        $\BT = \begin{bmatrix}
            \hphantom{-}\BI_n\hphantom{-} & -\BQbrbt^{-}\hphantom{-}\\
            \Bzero_n & \BI_n
        \end{bmatrix}\in\R^{2n\times 2n}$
        decouples the cascaded system realization in~\cref{eq:cascade2}. In other words, the transformed state space is given by
        \begin{align} \label{eq:BRBT_cascade_diag}
        \left(\begin{array}{ccccc|ccc}
            & -\BA^{\trans} + \hatBCbrbt^{\trans}\wh\BR^{-1}\wh\BB^{\trans} &  & \Bzero_n & & & \BQbrbt^{-}\hatBBbrbt &\\ 
            & \Bzero_n &  & \BA & & & \hatBBbrbt &\\ 
             \hline
            & \wh\BR^{-1}\BB^{\trans} &  &\hatBCbrbt & & & \Bzero_{p+m} &
        \end{array}\right).\end{align}
        Evidently, the stable part of this system has the transfer function $\hatBCbrbt(s\BI_n-\BA)^{-1}\hatBBbrbt$. Recalling that $\BBgenX=\hatBBbrbt$, $\BCgenY=\hatBCbrbt$, this is precisely $\BGforEA(s)$ according to~\cref{eq:GforEA}.
\end{proof}

Theorem~\ref{thm:brbt_from_data} shows how to interpret the transfer functions in Theorem~\ref{thm:gen_quadbt} in the context of \BRBT.
We call the resulting data-driven formulation \emph{quadrature-based \BRBT} (\QBRBT).
Algorithm~\ref{alg:gen_quadbt} yields \QBRBT when the transfer functions $\BGforEA(s)$, and $\BGforB(s), \BGforC(s)$ to be sampled are chosen as in~\cref{eq:GforA_E_brbt} and \cref{eq:GforB_C_brbt}, respectively.

\section{Numerical Examples}
\label{sec:numerics}
In this section, we provide a numerical proof of concept for the data-driven \BTr-\ROM{}s derived in Sections~\ref{sec:gen_quadbt} and~\ref{sec:applied_genqbt}. For each of the variants \BST, \PRBT, and \BRBT, we compare the performance of the data-driven \ROM{}s computed via \QBST, \QPRBT, and \QBRBT following the layout of Algorithm~\ref{alg:gen_quadbt} against their respective intrusive counterparts via Algorithm~\ref{alg:sqrt_bt}. 
The transfer function data in Theorems~\ref{thm:bst_from_data}---\ref{thm:brbt_from_data} required to build the \GQBT-\ROM{}s are computed intrusively using the state-space matrices $\BA,\BB,\BC,$ and $\BD$ of the full-order model being approximated, and the minimal solutions to the relevant \ARE{}s in each case.
All experiments were performed on a MacBook Pro equipped with 8 gigabytes of RAM and a 2.3 GHz Dual-Core Intel Core i5 processor running macOS Ventura version 13.6.1.
The experiments were run using \MATLAB version 23.2.0.2428915 (R2023b) Update 4.
The \MATLAB code and data for reproducing all experiments are publicly available at~\cite{supRei23}.

\textbf{Experimental setup.}
The system under study is an order $n=400$ single-input single-output \textsf{RLC} circuit model $\Sys$ presented in \cite{gugercin2004survey}; the physical parameters are chosen as $R=C=L=0.1,$ and $\overline{R}=1$. The system is both passive and square by construction and contains a non-trivial and non-singular input feedthrough term $\BD$. 
For \QBRBT, we normalize the circuit model so that $\|\Sys\|_{\CH_\infty}=0.5$, and $\Sys$ satisfies the bounded-real assumption~\cref{eq:br_con}.
The \emph{intrusive} \BST, \PRBT, and \BRBT-\ROM{}s that we benchmark the \GQBT-\ROM{}s against are computed using the \MATLAB toolbox \MORLAB~\cite{benner2021morlab}.
In generating the {synthetic spectral factor} data {from Theorems~\ref{thm:bst_from_data}---\ref{thm:brbt_from_data}}, the built-in \MATLAB routine `\textsf{icare}' 
{was} used to compute the minimal solutions of the appropriate \ARE{}s.
To implicitly approximate the relevant Gramians $\BPgen$ and $\BQgen$ {in each instance}, we employ the Trapezoidal rule using $N=40, 80, 160$ quadrature nodes. 
These are chosen as logarithmically-spaced points in the interval $\imunit[10^{-1},10^{4}]\subset \imunit\R$, closed under complex conjugation.
Although any numerical quadrature that yields a sufficiently accurate approximation to the Gramians can be used; see~\cite[Prop.~3.2]{gosea2022data}.

 \begin{remark}
     In some practical scenarios, solely evaluations of $\BG(s)$ are available. A similar approach as in~\cite{BenGV20} can then be used: first, construct a reduced-order surrogate using these data via \QBT, and then use this surrogate to obtain numerical evaluations of the spectral factors required for \QBST, \QPRBT, and \QBRBT.
     We emphasize that this is just one possibility for non-intrusively computing the spectral factor data, and that we are not advocating for its use as a practical method.
     In a future work, we will consider computational strategies for computing the spectral factor data from samples of $\BG(s)$. 
 \end{remark}

Figures~\ref{fig:qbst_ex},~\ref{fig:qprbt_ex}, and~\ref{fig:qbrbt_ex} compare the performance of \QBST, \QPRBT, and \QBRBT, respectively, against that of their intrusive counterparts. In each figure, the top plot depicts the \emph{true} singular values of $\sigma(\BL_\CY^{\trans}\BU_\CX)$ against the \emph{data-driven} $\sigma(\wtL_\CY^*\wtU_\CX)$, and the bottom plot depicts the relative $\CH_\infty$ error $\|\Sys-\Sysred\|_{\CH_\infty}/\|\Sys\|_{\CH_\infty}$ induced by the intrusive and data-driven \ROM{}s.
As illustrated by the figures, for each \BTr-variant, the data-driven singular values capture the true dominant singular values accurately. Similarly, 
the data-driven \GQBT-\ROM{}s 
approach the approximation quality of their intrusive counterpart as the number of nodes $N$ increases.
Therefore, using only the relevant input-output data, we match the performance of the intrusive \BTr-\ROM{}s.

\begin{figure}[h!]
    \centering
    \includegraphics[width=\linewidth]{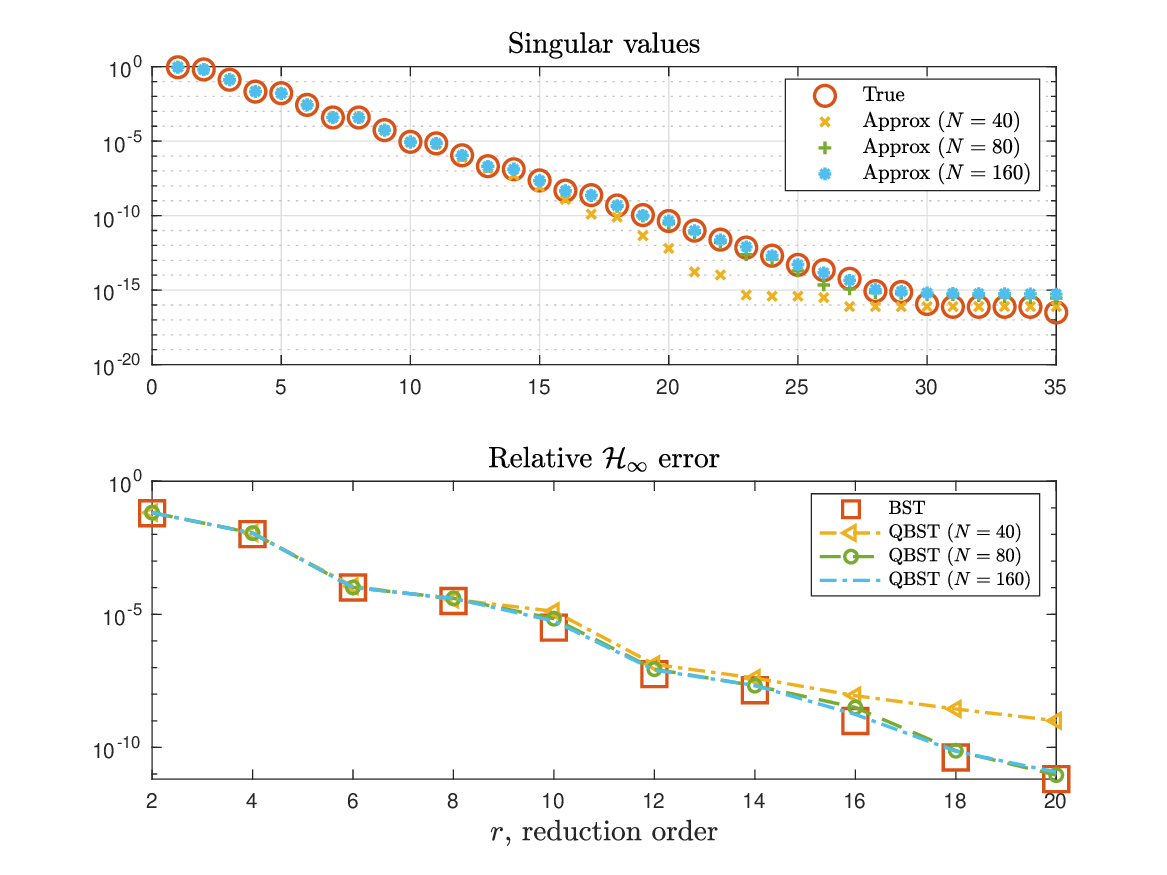}
    \caption{The true singular values compared against the approximate quadrature-based ones using $N$ quadrature nodes (top) and the relative $\CH_\infty$ approximation error for \BST and \QBST-\ROM{}s for orders $r=2,4,\ldots,20$ (bottom).}
    \label{fig:qbst_ex}
\end{figure}

\begin{figure}[h!]
    \centering
    \includegraphics[width=\linewidth]{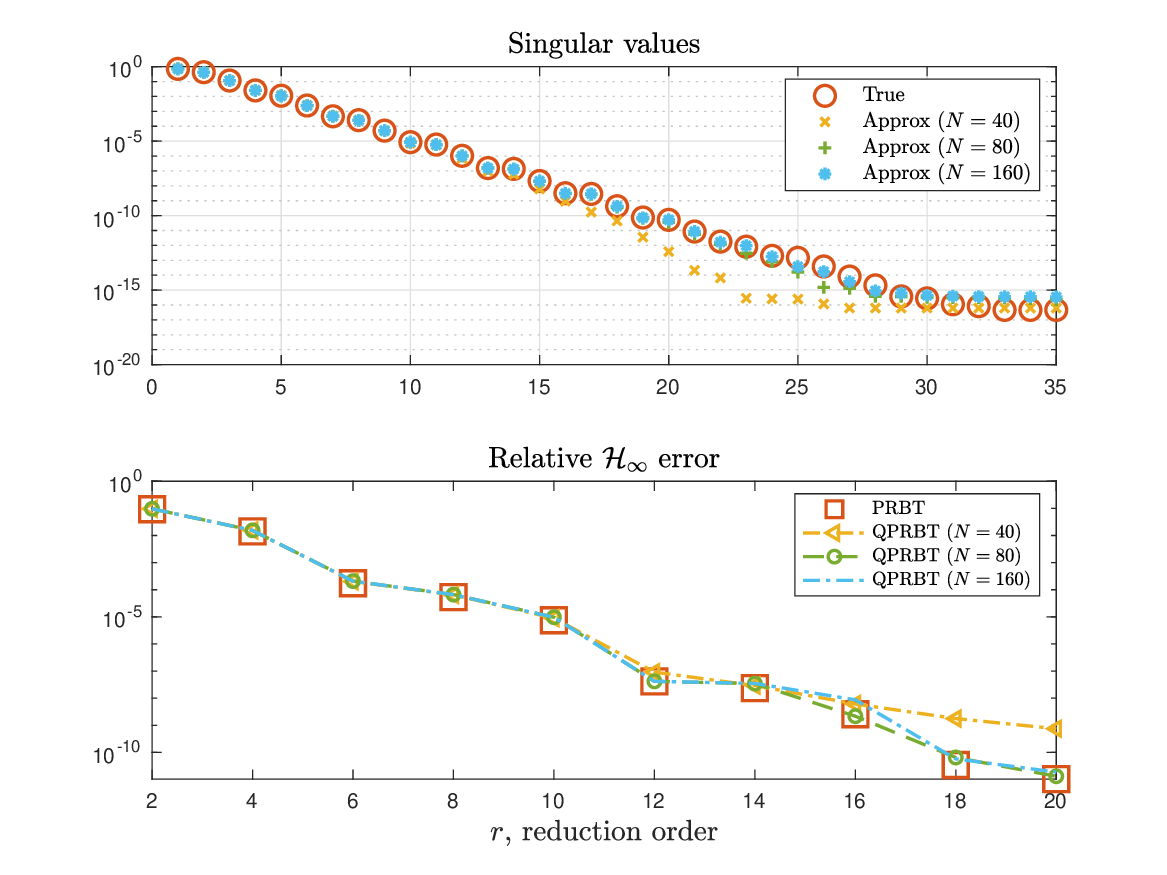}
    \caption{The true singular values compared against the approximate quadrature-based ones using $N$ quadrature nodes (top) and the relative $\CH_\infty$ approximation error for \PRBT and \QPRBT-\ROM{}s for orders $r=2,4,\ldots,20$ (bottom).}
    \label{fig:qprbt_ex}
\end{figure}

\begin{figure}[h!]
    \centering
    \includegraphics[width=\linewidth]{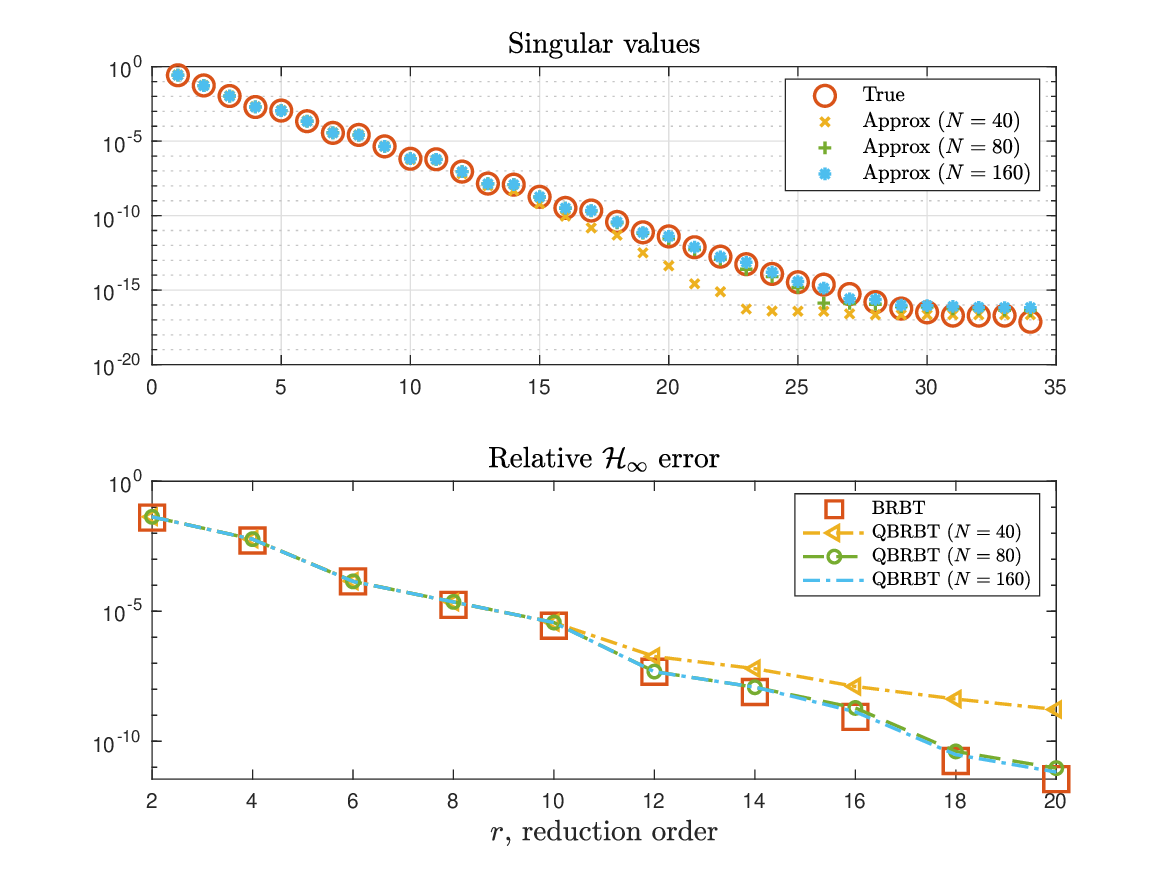}
    \caption{The true singular values compared against the approximate quadrature-based ones using $N$ quadrature nodes (top) and the relative $\CH_\infty$ approximation error for \BRBT and \QBRBT-\ROM{}s for orders $r=2,4,\ldots,{20}$ (bottom).}
    \label{fig:qbrbt_ex}
\end{figure}

\section{Conclusion}
\label{sec:conclusion}

We have developed the theoretical framework for data-driven formulations of \BST, \PRBT, and \BRBT by developing a generalized framework for quadrature-based balancing.
The resulting data-driven \BTr-\ROM{}s require sampling certain \emph{spectral factors} associated with the underlying model. 
The numerical examples serve as proof of concept for these data-driven \ROM{}s.
How to obtain the required samples of the relevant spectral factors in practice (experimentally) is an ongoing research question.

\section*{Acknowledgements}

This work was supported in part by the US NSF grants AMPS-
1923221 and DCSD-2130695.

\bibliographystyle{plainurl}
\bibliography{preprint}

\begin{thebibliography}{10}

\bibitem{antoulas2005approximation}
A.~C. Antoulas.
\newblock {\em Approximation of large-scale dynamical systems}.
\newblock SIAM, 2005.

\bibitem{antoulas2020interpolatory}
A.~C. Antoulas, C.~A. Beattie, and S.~G{\"u}{\u{g}}ercin.
\newblock {\em Interpolatory methods for model reduction}.
\newblock SIAM, 2020.

\bibitem{benner2017modelch6}
P.~Benner and T.~Breiten.
\newblock Model order reduction based on system balancing.
\newblock In {\em Model reduction and approximation: theory and algorithms},
  chapter~6, pages 261--295. SIAM, 2017.

\bibitem{BenGQetal20a}
P.~Benner, S.~Grivet-Talocia, A.~Quarteroni, G.~Rozza, W.~Schilders, and L.~M.
  Silveira, editors.
\newblock {\em Model Order Reduction: Volume 1: System- and Data-Driven Methods
  and Algorithms}.
\newblock De Gruyter, Berlin, Boston, 2021.

\bibitem{benner2017model}
P.~Benner, M.~Ohlberger, A.~Cohen, and K.~Willcox, editors.
\newblock {\em Model Reduction and Approximation: Theory and Algorithms}.
\newblock Computational Science \& Engineering. SIAM, Philadelphia, PA, 2017.

\bibitem{benner2001efficient}
P.~Benner, E.~S. Quintana-Ort{\'\i}, and G.~Quintana-Ort{\'\i}.
\newblock Efficient numerical algorithms for balanced stochastic truncation.
\newblock {\em International Journal of Applied Mathematics and Computer
  Science}, 11(5):1123--1150, 2001.

\bibitem{morBenS11}
P.~Benner and J.~Saak.
\newblock Efficient balancing-based {MOR} for large-scale second-order systems.
\newblock {\em Mathematical and Computer Modelling of Dynamical Systems},
  17(2):123--143, 2011.

\bibitem{benner2010balanced}
P.~Benner and A.~Schneider.
\newblock Balanced truncation model order reduction for {LTI} systems with many
  inputs or outputs.
\newblock In {\em Proceedings of the 19th International Symposium on
  Mathematical Theory of Networks and Systems--MTNS}, volume~5, 2010.

\bibitem{benner2021morlab}
P.~Benner and S.~W.~R. Werner.
\newblock {MORLAB}—the model order reduction {LAB}oratory.
\newblock {\em Model Reduction of Complex Dynamical Systems}, pages 393--415,
  2021.

\bibitem{BenGV20}
Peter Benner, Pawan Goyal, and Paul Van~Dooren.
\newblock Identification of port-hamiltonian systems from frequency response
  data.
\newblock {\em Systems \& Control Letters}, 143:104741, 2020.

\bibitem{breiten2021balancing}
T.~Breiten and T.~Stykel.
\newblock Balancing-related model reduction methods.
\newblock {\em System-and Data-Driven Methods and Algorithms}, 1:15--56, 2021.

\bibitem{desai1984transformation}
U.~Desai and D.~Pal.
\newblock A transformation approach to stochastic model reduction.
\newblock {\em IEEE Transactions on Automatic Control}, 29(12):1097--1100,
  1984.

\bibitem{enns1984model}
D.~F. Enns.
\newblock Model reduction with balanced realizations: An error bound and a
  frequency weighted generalization.
\newblock In {\em The 23rd IEEE Conference on Decision and Control}, pages
  127--132. IEEE, 1984.

\bibitem{glover1988state}
K.~Glover and J.~C. Doyle.
\newblock State-space formulae for all stabilizing controllers that satisfy an
  $\mathcal{H}_\infty$-norm bound and relations to relations to risk
  sensitivity.
\newblock {\em Systems and Control Letters}, 11(3):167--172, 1988.

\bibitem{gosea2022data}
I.~V. Gosea, S.~Gugercin, and C.~Beattie.
\newblock Data-driven balancing of linear dynamical systems.
\newblock {\em SIAM Journal on Scientific Computing}, 44(1):A554--A582, 2022.

\bibitem{green1988balanced}
M.~Green.
\newblock Balanced stochastic realizations.
\newblock {\em Linear Algebra and its Applications}, 98:211--247, 1988.

\bibitem{green1988relative}
M.~Green.
\newblock A relative error bound for balanced stochastic truncation.
\newblock {\em IEEE Transactions on Automatic Control}, 33(10):961--965, 1988.

\bibitem{gugercin2004survey}
S.~Gugercin and A.~C. Antoulas.
\newblock A survey of model reduction by balanced truncation and some new
  results.
\newblock {\em International Journal of Control}, 77(8):748--766, 2004.

\bibitem{laub1987}
Alan~J. Laub, Michael~T. Heath, Chris~C. Paige, and Robert~C. Ward.
\newblock Computation of system balancing transformations and other
  applications of simultaneous diagonalization algorithms.
\newblock {\em IEEE Transactions on Automatic Control}, 32(2):115--122, 1987.

\bibitem{mehrmann2023control}
V.~Mehrmann and B.~Unger.
\newblock Control of port-{H}amiltonian differential-algebraic systems and
  applications.
\newblock {\em Acta Numerica}, 32:395--515, 2023.

\bibitem{moore1981principal}
B.~Moore.
\newblock Principal component analysis in linear systems: Controllability,
  observability, and model reduction.
\newblock {\em IEEE Transactions on Automatic Control}, 26(1):17--32, 1981.

\bibitem{mullis1976synthesis}
C.~Mullis and R.~A. Roberts.
\newblock Synthesis of minimum roundoff noise fixed point digital filters.
\newblock {\em IEEE Trans. Circuits Syst.}, 23(9):551--562, 1976.

\bibitem{opdenacker1988contraction}
P.~C. Opdenacker and E.~A. Jonckheere.
\newblock A contraction mapping preserving balanced reduction scheme and its
  infinity norm error bounds.
\newblock {\em IEEE Transactions on Circuits and Systems}, 35(2):184--189,
  1988.

\bibitem{pernebo1982model}
L.~Pernebo and L.~Silverman.
\newblock Model reduction via balanced state space representations.
\newblock {\em IEEE Transactions on Automatic Control}, 27(2):382--387, 1982.

\bibitem{supRei23}
S.~Reiter.
\newblock Code, data, and results for numerical experiments in
  ``{G}eneralizations of data-driven balancing: what to sample for different
  balancing-based reduced models'' (version v3), November 2024.
\newblock \href {https://doi.org/10.5281/zenodo.14036010}
  {\path{doi:10.5281/zenodo.14036010}}.

\bibitem{Ro05}
C.~W. Rowley.
\newblock Model reduction for fluids, using balanced proper orthogonal
  decomposition.
\newblock {\em Int. J. Bifurcat. Chaos}, 15(3):997–--1013, 2005.

\bibitem{simoncini2016computational}
V.~Simoncini.
\newblock Computational methods for linear matrix equations.
\newblock {\em SIAM Review}, 58(3):377--441, 2016.

\bibitem{singler2009proper}
J.~R. Singler and B.~A. Batten.
\newblock A proper orthogonal decomposition approach to approximate balanced
  truncation of infinite dimensional linear systems.
\newblock {\em Intern. J. Computer Mathematics}, 86(2):355--371, 2009.

\bibitem{tombs87}
Michael~S Tombs and Ian Postlethwaite.
\newblock Truncated balanced realization of a stable non-minimal state-space
  system.
\newblock {\em International Journal of Control}, 46(4):1319--1330, 1987.

\bibitem{WP02}
K.~Willcox and J.~Peraire.
\newblock Balanced model reduction via the proper orthogonal decomposition.
\newblock {\em AIAA Journal}, 40(11):2323--2330, 2002.

\bibitem{zhou1996robust}
K.~Zhou, J.~C. Doyle, and K.~Glover.
\newblock {\em Robust and optimal control}.
\newblock Prentice-Hall, 1996.
\newblock {ISBN}: 978-0-13-456567-5.

\end{thebibliography}

\end{document}